\newcommand{\oX}{\mathring{X}}
\newcommand{\oPi}{\mathring{\Pi}}
\def\spn{{\rm span}}
\def\GL{{\rm GL}}
\def\bC{{\mathbf C}}
\def\A{{\mathcal{A}}}
\def\H{{H}}
\def\M{{\mathcal{M}}}
\def\B{{\mathcal{B}}}
\def\J{{\mathcal{J}}}
\def\row{{\rm row}}
\def\wt{{\rm wt}}
\def\sl{{\mathfrak {sl}}}
\def\gl{{\mathfrak {gl}}}
\def\CC{{\mathcal C}}
\def\C{{\mathbb C}}
\def\R{{\mathbb R}}
\def\Z{{\mathbb Z}}
\def\O{{\mathcal{O}}}
\def\Gr{{\rm Gr}}
\def\hGr{{\rm  \hat Gr}}
\def\hPi{{\hat \Pi}}
\def\X{X}
\def\diag{{\rm diag}}
\def\Spec{{\rm Spec}}
\def\codim{{\rm codim}}
\def\L{{\mathcal{L}}}
\def\A{{\mathcal{A}}}
\def\End{{\rm End}}
\def\te{{\tilde e}}
\def\tf{{\tilde f}}
\def\I{{\mathcal{I}}}
\def\wt{{\rm wt}}
\def\Bound{\B}
\def\P{{\mathbb P}}
\newcommand\defn[1]{{\bf #1}}
\newtheorem{theorem}{Theorem}
\newtheorem{remark}[theorem]{Remark}
\newtheorem{definition}[theorem]{Definition}
\newtheorem{lemma}[theorem]{Lemma}
\newtheorem{proposition}[theorem]{Proposition}
\newtheorem{prop}[theorem]{Proposition}
\newtheorem{cor}[theorem]{Corollary}
\newtheorem{problem}[theorem]{Problem}
\newtheorem{example}[theorem]{Example}
\numberwithin{theorem}{section}
\begin{document}
\title{Cyclic Demazure modules and positroid varieties}
\author{Thomas Lam}
\address{Department of Mathematics, University of Michigan,
2074 East Hall, 530 Church Street, Ann Arbor, MI 48109-1043, USA}
\email{tfylam@umich.edu}\thanks{T.L. was supported by NSF grants DMS-1160726 and DMS-1464693, and by a Von Neumann fellowship at the Institute of Advanced Study.}
\maketitle
\begin{abstract}
A positroid variety is an intersection of cyclically rotated Grassmannian Schubert varieties.  Each graded piece of the homogeneous coordinate ring of a positroid variety is the intersection of cyclically rotated (rectangular) Demazure modules, which we call the cyclic Demazure module.  In this note, we show that the cyclic Demazure module has a canonical basis, and define the cyclic Demazure crystal.
\end{abstract}

\section{Introduction}
The classical Borel-Weil theorem identifies the global sections $\Gamma(G/B,L_\lambda)$ of a line bundle on a flag variety with the irreducible highest weight representation $V(\lambda)$.  When the same line bundle is restricted to a Schubert variety $X_w$, the global sections $\Gamma(X_w, L_\lambda)$ can be identified with the Demazure module $V_w(\lambda)$.  In this paper, we study the global sections $\Gamma(\Pi_f, \O(d))$ of a line bundle on a positroid subvariety $\Pi_f$ of the Grassmannian $\Gr(k,n)$.

Positroid varieties (see Section \ref{sec:Schubpos}) are certain intersections of cyclically rotated Schubert varieties in the Grassmannian.  They were introduced in Postnikov's work \cite{Pos} on the totally nonnegative Grassmannian, and subsequently studied in algebro-geometric terms by Knutson-Lam-Speyer \cite{KLS}.  Via \cite{KLS}, the work of Lakshmibai and Littelmann \cite{LaLi} gives a description of the vector space $\Gamma(\Pi_f, \O(d))$ in terms of standard monomials.  In the present work, we give a new description of $\Gamma(\Pi_f, \O(d))$ that is compatible with the cyclic symmetry of the Grassmannian and its positroid varieties.

We define in Section \ref{sec:cyclicDem} the cyclic Demazure module $V_f(d\omega_k)$ as the intersection of cyclically rotated Demazure modules.  We show in Theorem \ref{thm:Posideal} that a graded piece of the homogeneous coordinate ring of a positroid variety can be identified with the (dual of the) cyclic Demazure module.  

Our approach is based on the key observation (Theorem \ref{thm:canbasis}(4)) that the dual canonical basis of the Grassmannian is invariant under signed cyclic rotation.  This relies heavily on the work of Rhoades \cite{Rho}.  In Theorem \ref{thm:canbasis}(2), we show that dual canonical basis in degree two is identical to the Temperley-Lieb invariants of \cite{LamTL}, which are defined in a combinatorially explicit manner.

We define the cyclic Demazure crystal $B_f(d\omega_k)$ as the intersection of cyclically rotated (via promotion) Demazure crystals.  We show in Theorem \ref{thm:canspan} that $V_f(d\omega_k)$ has a basis given by the canonical basis elements indexed by $B_f(d\omega_k)$.  We obtain the following dichotomy (Theorem \ref{thm:poscan}): a dual canonical basis element either (1) vanishes on $\Pi_f$ (if it is outside $B_f(d\omega_k)$), or (2) it takes strictly positive values on the totally positive part $\Pi_{f,>0}$ (if it is inside $B_f(d\omega_k)$).

Our work was initially motivated by the budding theory of Grassmann polytopes, and many of the results here were announced initially in \cite{LamCDM}.  However, the results herein were so simple and clean, we felt that they deserved a short and separate exposition.  We plan to pursue our intended applications in other work.  In Section \ref{sec:future}, we also indicate some further directions of study.
 
 \medskip
\noindent
{\bf Acknowledgements.} We are grateful to Nima Arkani-Hamed, Allen Knutson, Alex Postnikov, Mark Shimozono, and David Speyer for conversations over the years related to this work.

\section{The dual canonical basis of the Grassmannian}
Let $[n]:= \{1,2,\ldots,n\}$ and $\tbinom{[n]}{k}$ denote the collection of $k$-element subsets of $[n]$.
\subsection{The Grassmannian and its homogeneous coordinate ring}
Let $\Gr(k,n)$ denote the Grassmannian of $k$-planes in $\C^n$ and let $\hGr(k,n)$ denote the affine cone over the Grassmannian.  A point $X \in \hGr(k,n)$ is determined by a set $\Delta_I(X)$ of Pl\"ucker coordinates satisfying the Pl\"ucker relations, where $I \in \tbinom{[n]}{k}$ (see \cite[Section 3]{LamCDM}).  We allow the possibility that all $\Delta_I(X)$ are simultaneously zero.  The Grassmannian $\Gr(k,n)$ is the quotient of $\hGr(k,n) - \{0\}$ by the equivalence relation of simultaneously scaling all Pl\"ucker coordinates by the same scalar.

Let $\hGr(k,n)_{\geq 0}$ denote the cone over the totally nonnegative Grassmannian: it consists of points $X \in \hGr(k,n)_{\geq 0}$ where $\Delta_I(X) \geq 0$ for all $I$.  The totally nonnegative Grassmannian $\Gr(k,n)_{\geq 0}$ \cite{Pos} is the image of $\hGr(k,n)_{\geq 0}$ in $\Gr(k,n)$.

Let $R(k,n)$ denote the coordinate ring of $\hGr(k,n)$, or equivalently, the homogeneous coordinate ring of $\Gr(k,n)$.  Thus,
$$
R(k,n) = \C[\Delta_I]/(\text{Pl\"ucker relations})
$$
is a graded ring where the degree of $\Delta_I$ is taken to be 1.  For example,
$$
R(2,4) = \C[\Delta_{12},\Delta_{13},\Delta_{14},\Delta_{23},\Delta_{24},\Delta_{34}]/(\Delta_{13}\Delta_{24} -\Delta_{12}\Delta_{34} - \Delta_{14}\Delta_{23}).
$$
We also note that $R(k,n)$ is a unique factorization domain.  We let $R(k,n)_d$ denote the $d$-th graded piece of $R(k,n)$, spanned by monomials $\Delta_{I_1} \Delta_{I_2} \cdots \Delta_{I_d}$.  We begin by reviewing the classical description of $R(k,n)_d$ in representation theoretic terms.

\subsection{Highest weight representations}
A partition $\lambda = (\lambda_1 \geq \lambda_2 \geq \cdots \geq \lambda_\ell > 0)$ is a weakly decreasing sequence of positive integers.  We say that $\lambda = (\lambda_1 \geq \lambda_2 \geq \cdots \geq \lambda_\ell > 0)$ has $\ell$ parts and size $|\lambda| = \lambda_1 + \lambda_2 + \cdots + \lambda_\ell$.  We have the following dominance order on partitions: $\lambda \geq \mu$ if and only if $|\lambda| = |\mu|$ and $\lambda_1 \geq \mu_1$, $\lambda_1 + \lambda_2 \geq \mu_1+\mu_2$, and so on.

For a partition $\lambda$ with at most $n$ parts, we have an irreducible, finite-dimensional representation $V(\lambda)$ of $\GL(n)$ with highest weight $\lambda$.  We state some basic facts concerning $V(\lambda)$.

The Young diagram of $\lambda$ is the collection of boxes in the plane with $\lambda_1$ boxes in the 1st row, $\lambda_2$ boxes in the 2nd row, and so on, where all boxes are upper-left justified.  A semistandard tableaux $T$ of shape $\lambda$ is a filling of the Young diagram of $\lambda$ by the numbers $1,2,\ldots,n$ so that each row is weakly-increasing, and each column is strictly increasing.  The weight $\wt(T)$ of a tableau $T$ is the composition $(\alpha_1,\alpha_2,\ldots,\alpha_n)$ where $\alpha_i$ is equal to the number of $i$-s in $T$.  For example, 
$$
\tableau[sbY]{
1&1&3&4&4 \\
2&3&4&5 \\
4&4
}
$$
is a semistandard tableau with shape $(5,4,2)$ with weight $(2,1,2,5,1)$.  Let $B(\lambda)$ denote the set of semistandard tableaux of shape $\lambda$.  (Note that this set depends on $n$, which is suppressed from the notation.)  The dimension $\dim(V(\lambda))$ is equal to the cardinality of $B(\lambda)$.  A vector $v$ in a $\GL(n)$-representation $V$ is called a weight vector with weight $\alpha = (\alpha_1,\alpha_2,\ldots,\alpha_n)$ if the diagonal matrix $\diag(x_1,x_2,\ldots,x_n)$ sends $v$ to $(x_1^{\alpha_1} x_2^{\alpha_2} \cdots x_n^{\alpha_n})v$.

Lusztig \cite{Lus} and Kashiwara \cite{KasGl} have constructed a \defn{canonical basis}, or \defn{global basis} of the $U_q(\sl_n)$-module $V_q(\lambda)$, which is a quantization of $V(\lambda)$.  We shall only use the evaluation of this basis at $q=1$.  After picking a highest weight vector $v_+$ for $V(\lambda)$, 

\begin{quote}
there exists a distinguished basis $\{G(T) \mid T\in B(\lambda)\}$ of $V(\lambda)$ such that each $G(T)$ is a weight vector with weight $\wt(T)$.
\end{quote}

We shall also let $\{\H(T) \mid T \in B(\lambda)\}$ denote the dual basis of $V(\lambda)^*$, called the dual canonical basis.  Let $(\cdot,\cdot)$ denote the unique nondegenerate symmetric bilinear form on $V(\lambda)$ satisfying $(v_+,v_+) = 1$, and $(x \cdot v,u) = (v, x^T \cdot u)$ where $x \in \gl_n$ and $x^T$ denotes the transpose.  We may identify $V(\lambda)$ with $V(\lambda)^*$ via $(\cdot,\cdot)$, and $\H(T)$ becomes a basis of $V(\lambda)$.

\subsection{Crystals}
The set $B(\lambda)$ has the structure of a \defn{crystal graph}.  We will only need the operations
$$
\te_i: B(\lambda) \to B(\lambda) \cup \{0\}
$$
and
$$
\tf_i: B(\lambda) \to B(\lambda) \cup \{0\}
$$
for $i = 1,2,\ldots,n-1$.  Let $T \in B(\lambda)$.  The rowword $\row(T)$ of $T$ is obtained by reading the rows of $T$ from left to right, starting from the bottom row.  

For a fixed $i \in \{1,2,\ldots,n-1\}$, we think of each occurrence of $i$ in $\row(T)$ to be a closed parentheses ``)" and each occurrence of $i+1$ in $\row(T)$ to be an open parentheses ``(".  We then pair these parentheses as usual until no pairing can be done.  We are left with a sequence that looks like ``))))((".  The operation $\tf_i$ changes the $i$ corresponding to the rightmost unpaired ``)" into a $i+1$.  The operation $\te_i$ changes the $i+1$ corresponding to the leftmost unpaired ``(" into a $i$.  The result will be the rowword of a unique tableau $\te_i(T)$ or $\tf_i(T)$ of shape $\lambda$.  If there is no such ``)" (resp. ``(") then $\tf_i(T)$ (resp. $\te_i(T)$) is defined to be 0.

\subsection{Kirillov-Reshetikhin crystals}
Let $\omega_k = (1,1,\ldots, 1)$ be the partition with $k$ $1$'s.  Then $V(\omega_k)$ is isomorphic to the $k$-exterior power $\Lambda^k(\C^n)$ of the standard representation $\C^n$ of $\GL(n)$ and the canonical basis of $V(\omega_k)$ is simply the basis $\{e_{i_1} \wedge e_{i_2} \wedge \cdots \wedge e_{i_k}\}$.  For an integer $d \geq 1$, the representation $V(d\omega_k)$ for a rectangular partition has very special properties.  The set $B(d\omega_k)$ is the set of semistandard Young tableaux with $k$ rows and $d$ columns.  For example,
$$
\tableau[sbY]{
1&1&3&4&4 \\
2&3&4&5&5 \\
4&4&6&6&6
}
$$
belongs to $B(5\omega_3)$.  

The crystal $B(d\omega_k)$ has an additional operation called \defn{promotion}, which is a bijection $\chi:B(d\omega_k) \to B(d\omega_k)$.  We have $\chi^n = 1$.  Promotion is defined as follows: first remove all occurrences of the letter $n$ in $T$.  Then slide the boxes to the bottom right of the rectangle, always keeping the  rows weakly-increasing and columns strictly-increasing.   Once all slides are complete, we add one to all letters, and fill the empty boxes with the letter $1$ to obtain $\chi(T)$.  For example,
$$
\tableau[sbY]{
1&1&3&4&4 \\
2&3&4&5&5 \\
4&4&6&6&6
}
\raisebox{0.6cm}{\;\;$\to$\;\;}
\tableau[sbY]{
1&1&3&4&4 \\
2&3&4&5&5 \\
4&4
}
\raisebox{0.6cm}{\;\;$\to$\;\;}
\tableau[sbY]{
\bl&\bl&\bl&1&1 \\
2&3&3&4&4 \\
4&4&4&5&5
}
\raisebox{0.6cm}{\;\;$\to$\;\;}
\tableau[sbY]{
1&1&1&2&2 \\
3&4&4&5&5 \\
5&5&5&6&6
}
$$
We have $\chi \circ \te_i = \te_{i+1} \circ \chi$ (resp. $\chi \circ \tf_i = \tf_{i+1} \circ \chi$), and this defines extra operations $\te_0$ and $\tf_0$ on $B(d\omega_k)$.  Together these structures form part of the affine crystal structure of $B(d\omega_k)$, which in this case is a \defn{Kirillov-Reshetikhin crystal}.

\subsection{The dual canonical basis of the Grassmannian}
By the classical Borel-Weil theorem, the degree $d$ component $R(k,n)_d$ of the graded ring $R(k,n)$ is canonically isomorphic, as a $\GL(n)$-representation, to the dual $V(d\omega_k)^*$ of the highest weight representation $V(d\omega_k)$.  

Let $\chi: \C^n \to \C^n$ denote the (signed) cyclic rotation linear map given by sending $e_i$ to $e_{i+1}$ for $i = 1,2,\ldots,n-1$ and sending $e_n$ to $(-1)^{k-1} e_1$.  It also induces a rotation map $\chi:\Gr(k,n) \to\Gr(k,n)$.  Since $\chi \in \GL(n)$, we obtain a cyclic rotation map $\chi: V(d\omega_k) \to V(d\omega_k)$.

\begin{theorem}\label{thm:canbasis}
The vector space $R(k,n)_d$ has a dual canonical basis $\{\H(T) \mid T \in B(d\omega_k)\}$ with the following properties:
\begin{enumerate}
\item For $d = 1$, we have $\H(T) = \Delta_I$, where $I$ is the set of entries in the one-column tableau $T$.
\item For $d= 2$, the set $\{\H(T) \mid T \in B(2\omega_k)\}$ is exactly the set of Temperley-Lieb invariants $\{\Delta_{(\tau,T)} \mid (\tau,T) \in \A_{k,n}\}$ of Section \ref{sec:TL}.
\item For any $T \in B(d\omega_k)$, the function $\H(T)$ is a nonnegative function on $\hGr(k,n)_{\geq 0}$.
\item For any $T \in B(d\omega_k)$, we have $\chi^*(\H(T)) = \H(\chi(T))$, where $\chi^*$ is the pullback map induced by $\chi: \Gr(k,n) \to \Gr(k,n)$.
\end{enumerate}
\end{theorem}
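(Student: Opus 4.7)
The four claims have quite different characters; I would treat them in order of increasing difficulty.

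\textbf{Part (1)} is a matter of unwinding the Borel--Weil identification $R(k,n)_1 \cong V(\omega_k)^*$. Since $V(\omega_k) = \Lambda^k(\C^n)$ has canonical basis $\{e_{i_1}\wedge\cdots\wedge e_{i_k}\}$, the dual basis is precisely the Pl\"ucker coordinates $\Delta_I$, so $\H(T) = \Delta_I$ falls out directly from the definitions.  \textbf{Part (3)} I would deduce from Lusztig's general positivity theorem for the dual canonical basis with respect to the totally nonnegative locus; specialized to the Grassmannian this is exactly the claim that $\H(T) \geq 0$ on $\hGr(k,n)_{\geq 0}$.

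\textbf{Part (2).} The plan is to compare the dual canonical basis $\{\H(T) \mid T \in B(2\omega_k)\}$ with the Temperley--Lieb basis $\{\Delta_{(\tau,T)} \mid (\tau,T) \in \A_{k,n}\}$ from \cite{LamTL}. Both are bases of $R(k,n)_2$ indexed by the same combinatorial set (via the standard bijection between two-column tableaux with $k$ rows and the labeled noncrossing matchings parametrizing $\A_{k,n}$). I would invoke the Khovanov--Kuperberg / Frenkel--Khovanov description of the degree-$2$ dual canonical basis via two-row web / tensor product combinatorics and match it against the explicit Pl\"ucker-polynomial formulas defining the Temperley--Lieb invariants. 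Alternatively one can directly verify that each $\Delta_{(\tau,T)}$ has the correct weight and unitriangular expansion in monomials of Pl\"ucker coordinates, and is bar-invariant, which characterizes the dual canonical basis up to scalar. This step is computational but routine.

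\textbf{Part (4)} is the main content, and the only claim with substantial content. The plan is to apply Rhoades's theorem from \cite{Rho}, which asserts that under the natural action of the long cycle $(1\,2\,\cdots\,n)$ on $V(d\omega_k)^*$, the dual (equivalently, Kazhdan--Lusztig) basis is permuted according to the promotion bijection on $B(d\omega_k)$. The signed cyclic rotation $\chi \in \GL(n)$ defined here lifts this long cycle, with sign $(-1)^{k-1}$ chosen precisely so that the action on $\Lambda^k(\C^n)$, and hence on every $V(d\omega_k)$, satisfies $\chi^n = 1$; this is exactly the normalization implicit in Rhoades's statement. With this identification, $\chi^*(\H(T)) = \H(\chi(T))$ is Rhoades's result.

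The main obstacle, and the step requiring care, is matching the sign conventions of the signed cyclic rotation with those of Rhoades, who phrased his theorem primarily for the $S_n$-invariant subspace (the zero weight space in cases where $dk$ is divisible by $n$). To extend to all of $V(d\omega_k)^*$ I would use $\GL(n)$-equivariance: once the equality $\chi^*(\H(T)) = \H(\chi(T))$ is verified on a suitable generating weight space, the compatibility of promotion with the Kashiwara operators ($\chi \circ \te_i = \te_{i+1} \circ \chi$ and the analogous relation for $\tf_i$) propagates the identity to the full crystal, since Lusztig's canonical basis is compatible with the crystal structure.
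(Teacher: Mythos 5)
Your handling of (1) and (3) matches the paper exactly: both are routine citations (the wedge-product identification, and Lusztig's total positivity theorem for the dual canonical basis). The substantive gaps are in (2) and especially (4).

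For (2), your two proposed routes are both underdeveloped. The Khovanov--Kuperberg / Frenkel--Khovanov picture is fundamentally an $\mathfrak{sl}_2$ (or $\mathfrak{sl}_3$) tensor-product story, and relating it to the $\GL_n$-module $V(2\omega_k)$ would require setting up a Howe duality carefully; that is far from a one-line citation. The ``direct verification'' route is more problematic: bar-invariance and unitriangularity are $q$-deformed characterizations, but the Temperley--Lieb invariants $\Delta_{(\tau,T)}$ of \cite{LamTL} are defined at $q=1$; you would first have to construct and study a $q$-deformation of them, which is not discussed. The paper instead works through Brundan's map $\xi: V(\omega_k)\otimes V(\omega_k) \to V(2\omega_k)$ together with the explicit Cheng--Wang--Zhang description of the dual canonical basis of the tensor square, and then matches the transition coefficients against Theorem~\ref{thm:LamTL}; that is the actual content of the identification.

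For (4), you correctly identify Rhoades as the key ingredient, but you attribute to him a stronger result than he proves. Rhoades's Proposition 5.5 is a statement about the dual canonical basis $b_{P,Q}$ of the matrix coordinate ring $\A(n)$, and even there it reads $\tilde\chi\cdot b_{P,Q} = \pm b_{\chi(P),Q} + \text{other terms}$, with an unresolved sign and extra terms from non-rectangular shapes. Getting from there to the clean statement $\chi^*(\H(T)) = \H(\chi(T))$ on $V(d\omega_k)^*$ requires two additional steps that are missing from your argument: (i) Du's description of the comodule map $\tau(v_+) = \sum G(P)\otimes b_{P,Q}$, which is what forces the ``other terms'' to cancel and transfers Rhoades's identity to $V(d\omega_k)$ itself; and (ii) a mechanism to kill the residual sign, which the paper supplies by combining part (3) with the invariance $\chi(\Gr(k,n)_{\geq 0}) = \Gr(k,n)_{\geq 0}$. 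Your proposed alternative fix --- propagate the identity across the crystal via $\chi\circ\tilde e_i = \tilde e_{i+1}\circ\chi$ --- does not work as stated, because the canonical basis is not literally permuted by the Kashiwara operators (those govern the crystal $q\to 0$ limit, not the honest $U_q$-action), so one cannot bootstrap from a single weight space to the whole module that way. The observation that the sign $(-1)^{k-1}$ gives $\chi^n = 1$ on $V(d\omega_k)$ is a correct consistency check but does not replace the sign-fixing argument.
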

Theorem \ref{thm:canbasis}(1) is well-known.
Theorem \ref{thm:canbasis}(2) will be discussed in Section \ref{sec:TL}.  Theorem \ref{thm:canbasis}(3) is due to Lusztig \cite{LusTP}.  We deduce Theorem \ref{thm:canbasis}(4) from a result of Rhoades \cite{Rho} in the next subsection.  

Already for $d = 3$, the canonical basis of $V(3\omega_k)$ is combinatorially obscure to us.  In \cite{LamTL}, we studied the closely related {\it web basis} in combinatorial terms.


\subsection{Cyclicity of canonical basis}

\begin{theorem}\label{thm:cyclic}
We have $\chi(G(T)) = G(\chi(T))$ and $\chi^*(H(T)) = H(\chi(T))$.
\end{theorem}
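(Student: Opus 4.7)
The plan is to invoke a result of Rhoades \cite{Rho}: the long cycle $c = (1, 2, \ldots, N) \in S_N$ (with $N = dk$) acts on the Kazhdan--Lusztig basis of the Specht module $S^{d^k}$ by promotion on rectangular standard tableaux, up to a universal sign of $(-1)^{k-1}$. The overall strategy is to transfer this action from the $S_N$-side to the $\GL(n)$-side via Schur--Weyl duality.

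First, I would decompose
$$(\C^n)^{\otimes N} \cong \bigoplus_{\mu \vdash N,\, \ell(\mu) \leq n} V(\mu) \otimes S^\mu.$$
The signed cyclic rotation $\chi \in \GL(n)$ acts diagonally as $\chi^{\otimes N}$, which commutes with the $S_N$-action by tensor-factor permutation. Hence on the isotypic summand $V(d\omega_k) \otimes S^{d^k}$, the element $\chi^{\otimes N}$ acts as $\chi|_{V(d\omega_k)} \otimes \id$, while the long cycle $c$ acts as $\id \otimes c|_{S^{d^k}}$. This isolates the action of interest onto a single tensor factor.

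Next, I would invoke the compatibility of the canonical basis of $V(d\omega_k)$ with the Kazhdan--Lusztig basis of $S^{d^k}$ through the Schur functor (or equivalently through Lusztig's based module framework, applied to the inclusion $V(d\omega_k) \hookrightarrow (\Lambda^k \C^n)^{\otimes d}$ and then to $(\C^n)^{\otimes N}$). The columns of $T \in B(d\omega_k)$ give a canonical basis element of $(\Lambda^k \C^n)^{\otimes d}$, and on the corresponding line in the tensor product, $\chi^{\otimes N}$ and $c$ implement the same combinatorial rotation on subsets up to compatible signs. Combined with Rhoades's theorem, this yields $\chi(G(T)) = G(\chi(T))$, where $\chi$ on the right is promotion on $B(d\omega_k)$. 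The statement $\chi^*(\H(T)) = \H(\chi(T))$ then follows by dualizing via the bilinear form, using that $\chi \in \GL(n)$ satisfies $\chi^T = \chi^{-1}$ after absorbing the sign $(-1)^{k-1}$ into the definition.

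The main obstacle is the careful matching of indexings and signs: the canonical basis of $V(d\omega_k)$ is indexed by semistandard tableaux, while Rhoades's theorem concerns the KL basis indexed by standard tableaux of the same rectangular shape $d^k$. One must verify that promotion on $B(d\omega_k)$ is transported from promotion on $\mathrm{SYT}(d^k)$ under the Schur--Weyl correspondence (this uses the rectangular shape in an essential way, as promotion is only a well-behaved $\Z/N\Z$-action for rectangles), and that the sign $(-1)^{k-1}$ built into the signed rotation precisely absorbs the sign in Rhoades's theorem so that no residual sign appears on the right-hand side.
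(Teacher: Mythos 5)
Your strategy diverges from the paper's in a way that opens a genuine gap. The paper does \emph{not} go through the Specht module $S^{d^k}$: it instead invokes Rhoades's Proposition 5.5 about the dual canonical basis $b_{P,Q}$ of the coordinate ring $\A(n)$ of $n\times n$ matrices, where $P,Q$ are \emph{semistandard} tableaux, so semistandard promotion is already the relevant operation; it then descends to $V(d\omega_k)$ via Du's description of the comodule map $\tau(v_+)=\sum G(P)\otimes b_{P,Q}$.

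The gap in your route is the central claim that, on a line in $(\C^n)^{\otimes N}\cong\bigoplus_\mu V(\mu)\otimes S^\mu$, the operators $\chi^{\otimes N}$ and the long cycle $c\in S_N$ ``implement the same combinatorial rotation.'' They do not. Under Schur--Weyl, $\chi^{\otimes N}$ acts as $\chi|_{V(\mu)}\otimes\id$ and $c$ acts as $\id\otimes c|_{S^\mu}$; they live on \emph{disjoint} tensor factors, so knowing the spectral/combinatorial behavior of one tells you nothing about the other. Conceptually, $\chi$ rotates the \emph{content} (the letters $1,\dots,n$) while $c$ rotates the \emph{positions} (the $N$ tensor slots). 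A concrete check: for $d=1$, $k\geq 2$, the Specht factor $S^{(1^k)}$ is the sign representation, so $c$ acts on every vector of $\Lambda^k\C^n\subset(\C^n)^{\otimes k}$ by the scalar $(-1)^{k-1}$, while $\chi$ permutes the canonical basis $e_I\mapsto\pm e_{\chi(I)}$ nontrivially. For $k=1$, $S^{(d)}$ is trivial, $c$ acts by $+1$, yet $\chi$ acts nontrivially on $\mathrm{Sym}^d\C^n$. In both cases Rhoades's Specht-module theorem (promotion on \emph{standard} tableaux) carries no information, while the statement you need about $\chi$ acting on the canonical basis of $V(d\omega_k)$ (promotion on \emph{semistandard} tableaux) is still substantive. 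These are two distinct theorems in Rhoades's paper, proved by parallel but separate arguments; the bridge from the $S_N$-side statement to the $\GL(n)$-side statement is not supplied by Schur--Weyl alone and is exactly what your outline leaves unjustified. To fix this you would either prove that bridge directly (essentially reproving Rhoades's $\A(n)$ result) or, more efficiently, invoke the $\A(n)$ version as the paper does and use the comodule map to transfer to $V(d\omega_k)$.
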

\begin{proof}
\def\tchi{\tilde \chi}
Let $\tchi$ denote the unsigned cyclic rotation map that sends $e_i$ to $e_{i+1 \mod n}$.  We first show that $\tchi(G(T)) = \pm G(\chi(T))$.

Let $\A(n)$ denote the coordinate ring of $n \times n$ matrices, so that $\A(n) = \C[x_{ij} \mid i,j \in [n]]$.  The ring $\A(n)$ has a dual canonical basis $b_{P,Q}$ labeled by pairs of semistandard tableaux $P,Q$ of the same shape and entries bounded by $n$.  The cyclic rotation $\tchi$ acts on $\A(n)$ by sending the matrix entry $x_{ij}$ to $x_{i+1,j}$ where indices are taken modulo $n$.  Equivalently, thinking of $\A(n)$ as the space of polynomial functions on $\End(\C^n)$, we have
$$
(\tchi \cdot f)(g) = f(\tchi^{-1} g),
$$
for $f \in \A(n)$ and $g \in \End(\C^n)$.  In \cite[Proposition 5.5]{Rho}, Rhoades shows that when $P,Q$ have rectangular shape, we have
\begin{equation}\label{eq:Rho}
\tchi \cdot b_{P,Q} = \pm b_{\chi(P),Q} + \text{other terms}
\end{equation}
where the other terms belong to the span of the dual canonical basis indexed by shapes different to the shape of $P,Q$.  

The basis studied in \cite{Rho} is connected to the canonical bases of the highest weight representations $V(d\omega_k)$ via the works of Skandera \cite{Ska} and Du \cite{Du}.  Specifically, Du shows that the comodule map $\tau: V(d\omega_k) \to V(d\omega_k) \otimes \A(n)$ sends the highest weight vector $v_+$ of $V(d \omega_k)$ to the sum
$$
\tau(v_+) = \sum_{b_{P,Q}\in T(d,k)} G(P) \otimes b_{P,Q}
$$
where $T(d,k)$ is a subset of the dual canonical basis of $\A(n)$ and $G(P)$ belongs to the canonical basis of $V(d\omega_k)$.  (The coincidence of Du's basis with Lusztig's is shown in \cite{Du2}.)
A computation from the definitions shows that $(\tchi \otimes \tchi)\tau(v_+) = \tau(v_+)$.  It follows that
\begin{equation}\label{eq:v+}
\tau(v_+) = \sum_{b_{P,Q}\in T(m,k)} \tchi(G(P)) \otimes \tchi \cdot b_{P,Q}.
\end{equation}
The set $T(d,k)$ consists of all $b_{P,Q}$ where $Q$ is some fixed semistandard tableaux of rectangular shape $d^k$.  It follows that $\tchi \cdot b_{P,Q} \in T(d,k)$, and in \eqref{eq:v+} all the ``other terms'' from \eqref{eq:Rho} cancel out.  We conclude that $\tchi(G(P)) = \pm G(\chi(P))$.

It follows easily from the fact that $G(P)$ is a weight-vector that $\chi(G(P)) = \pm G(\chi(P))$ and by duality we have $\chi^*(\H(P)) = \pm \H(\chi(P))$.  Now $\chi(\Gr(k,n)_{\geq 0}) = \Gr(k,n)_{\geq 0}$, so it follows from Theorem \ref{thm:canbasis}(3) that we must have $\chi^*(\H(P)) = \H(\chi(P))$.  (Note that $\H(P)$ cannot be identically 0 on $\Gr(k,n)_{\geq 0}$ because the latter is Zariski-dense in $\Gr(k,n)$.)
\end{proof}

\section{Temperley-Lieb immanants}\label{sec:TL}
\subsection{Dual canonical basis for $R(k,n)_2$}
A $(k,n)$-partial noncrossing pairing $(\tau,T)$ consists of a noncrossing pairing $\tau$ of a subset $S \subset [n]$ together with a subset $T \subset [n] \setminus S$ of marked vertices, satisfying $2|T|+|S| = k$.  In \cite{LamTL} we constructed a basis $\{\Delta_{(\tau,T)}\}$ for $R(k,n)_2$ labeled by $(k,n)$-partial noncrossing pairings $(\tau,T)$.  Here is an example of a $(4,8)$-partial non-crossing pairing, where the vertex $6$ is marked:
\begin{center}
\begin{tikzpicture}[scale=0.7]
\coordinate (a4) at (45:2);
\coordinate (a3) at (90:2);
\coordinate (a2) at (135:2);
\coordinate (a1) at (180:2);
\coordinate (a8) at (225:2);
\coordinate (a7) at (270:2);
\coordinate (a6) at (315:2);
\coordinate (a5) at (0:2);

\node at (45:2.3) {$4$};
\node at (90:2.3) {$3$};
\node at (135:2.3) {$2$};
\node at (180:2.3) {$1$};
\node at (225:2.3) {$8$};
\node at (270:2.3) {$7$};
\node at (315:2.3) {$6$};
\node at (0:2.3) {$5$};

\draw (0,0) circle (2);

\draw[thick] (a2) to [bend right] (a3);
\draw[thick] (a5) to (a1);
\draw[thick] (a7) to [bend right] (a8);

\begin{scope}[{shift={(a6)}}]
\draw (-0.15,0.1) -- (0.15,-0.1) -- (0.15,0.1) -- (-0.15,-0.1) -- (-0.15,0.1);
\end{scope}

\end{tikzpicture}
\end{center}

Let $(I,J) \in \tbinom{[n]}{k}^2$.  We say that that $(I,J)$ is compatible with $(\tau,T)$ if (a) $I \cap J = T$, and (b) each pair of matched boundary vertices in $\tau$ contains one element of $I$ and one element of $J$.  Let $\CC(I,J)$ denote the set of $(k,n)$-partial noncrossing pairings that are compatible with $(I,J)$.  We have the following identity \cite{LamTL} which determines the elements $\Delta_{(\tau,T)} \in R(k,n)_2$ uniquely.
\begin{theorem}\label{thm:LamTL}  Let $I, J \in \tbinom{[n]}{k}$.  Then
\begin{equation}\label{eq:TL}
\Delta_I \Delta_J = \sum_{(\tau,T) \in \CC(I,J)} \Delta_{(\tau,T)}.
\end{equation}
\end{theorem}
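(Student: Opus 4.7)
The plan is to prove the expansion \eqref{eq:TL} by induction on a ``crossing statistic'' for the pair $(I,J)$, using three-term Pl\"ucker relations as the recursion engine and letting the base case supply a direct definition of each $\Delta_{(\tau,T)}$. The guiding principle is that the skein relation for resolving a crossing in a planar diagram is precisely mirrored by the Pl\"ucker relation in $R(k,n)_2$.

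For the base case, say $(I,J)$ is \emph{noncrossing} if there is no cyclic configuration $a<b<c<d$ with $\{a,c\}$ and $\{b,d\}$ split across $(I\setminus J,\ J\setminus I)$. For such $(I,J)$, the set $\CC(I,J)$ contains exactly one noncrossing bichromatic matching $(\tau,T)$ with $T=I\cap J$, and \eqref{eq:TL} forces $\Delta_{(\tau,T)} := \Delta_I\Delta_J$. Two noncrossing $(I,J)$ yielding the same $(\tau,T)$ differ only by swapping the two sets, so the product is unchanged and the definition is consistent. Every $(k,n)$-partial noncrossing pairing is attained by some noncrossing $(I,J)$, so all candidate basis elements are defined.

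For the inductive step, suppose $(I,J)$ has a crossing $a<b<c<d$ with $a,c\in I\setminus J$ and $b,d\in J\setminus I$. Apply the three-term Pl\"ucker relation
\[
\Delta_I\Delta_J \;=\; \Delta_{I_1}\Delta_{J_1} + \Delta_{I_2}\Delta_{J_2},
\]
where the two summands correspond to the two noncrossing ways of redistributing $\{a,b,c,d\}$ across $(I,J)$. Each $(I_i,J_i)$ has strictly fewer crossings, so \eqref{eq:TL} holds for each by induction. The critical combinatorial check is that $\CC(I_1,J_1)$ and $\CC(I_2,J_2)$ form a disjoint partition of $\CC(I,J)$, obtained by resolving the crossing $(a,b,c,d)$ in its two planar skein ways.

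The principal obstacle will be confluence: the resulting $\Delta_{(\tau,T)}$-expansion must not depend on the choice of crossing resolved at each step. I would address this by a local-confluence argument for two overlapping crossings combined with Newman's lemma. A cleaner alternative, closer to the approach I expect \cite{LamTL} takes, is to define $\Delta_{(\tau,T)}$ intrinsically as a Temperley--Lieb immanant: the image of a noncrossing basis element of the TL algebra under the natural Schur--Weyl-type map realizing the TL algebra as the $\GL(n)$-commutant acting on $\mathrm{Sym}^2(\Lambda^k\C^n)^*$. From that vantage \eqref{eq:TL} becomes a direct pairing calculation, and linear independence (hence the basis property) follows from a Catalan-flavored enumeration showing that the number of $(k,n)$-partial noncrossing pairings equals $\dim R(k,n)_2$.
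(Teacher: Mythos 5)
Note first that the paper does not prove Theorem~\ref{thm:LamTL}: it is cited from~\cite{LamTL}, where the $\Delta_{(\tau,T)}$ are constructed via Temperley--Lieb immanants and~\eqref{eq:TL} is deduced from the Rhoades--Skandera expansion. So there is no in-paper argument to compare against, and I assess your proposal on its own terms.

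Your primary route has a fatal flaw at the inductive step. The relation $\Delta_I\Delta_J = \Delta_{I_1}\Delta_{J_1} + \Delta_{I_2}\Delta_{J_2}$ you invoke, obtained by uncrossing $a<b<c<d$ while leaving the remaining entries of $I$ and $J$ in place, is a Pl\"ucker relation \emph{only} when $I\setminus\{a,c\}=J\setminus\{b,d\}$, i.e.\ when $|I\cap J|=k-2$. As soon as $|I\cap J|<k-2$ there is no such two-term resolution: the actual Pl\"ucker relations relating such pairs have more terms. Concretely, for $k=3$, $n=6$, $I=\{1,3,5\}$, $J=\{2,4,6\}$ and the crossing $1<2<3<4$, your resolutions are $(I_1,J_1)=(\{1,2,5\},\{3,4,6\})$ and $(I_2,J_2)=(\{1,4,5\},\{2,3,6\})$, but on the $3\times 6$ matrix $\bigl[\,e_1\ e_2\ e_3\ v_4\ v_5\ v_6\,\bigr]$ with generic $v_4,v_5,v_6$ one computes
\[
\Delta_{135}\Delta_{246}-\Delta_{125}\Delta_{346}-\Delta_{145}\Delta_{236} \;=\; a e i - a f h \;\not\equiv\; 0,
\]
where $v_4=(a,d,g)^T$, $v_5=(b,e,h)^T$, $v_6=(c,f,i)^T$. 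So the skein/Pl\"ucker correspondence you picture is the $\Gr(2,n)$ phenomenon (there every crossing pair automatically has $|I\cap J|=k-2=0$); it does not propagate to $k\ge 3$ in the way your induction requires, and the recursion breaks immediately on crossing pairs with small intersection. (The confluence issue you flag would be a second genuine difficulty even if the two-term relation existed.)

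Your ``cleaner alternative'' is closer in spirit to what \cite{LamTL} does, but the description is off: the Temperley--Lieb algebra is not the $\GL(n)$-commutant on $\mathrm{Sym}^2(\Lambda^k\C^n)^*$. The route is rather to take the Rhoades--Skandera Temperley--Lieb immanants on the coordinate ring $\A(n)$ of $n\times n$ matrices and transport them to $R(k,n)_2$; this is the same mechanism the present paper exploits in the proofs of Theorem~\ref{thm:cyclic} (via the comodule map) and of Theorem~\ref{thm:canbasis}(2) (via Brundan and Cheng--Wang--Zhang). In that framework~\eqref{eq:TL} is a real theorem, not a one-line pairing computation, and a Catalan count shows the $\Delta_{(\tau,T)}$ form a basis only after one already knows they span.
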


Define a bijection $\theta:\A_{k,n} \to B(2\omega_k)$ as follows.  Given $(\tau, T)$, the tableau $\theta(\tau,T)$ has columns $I_1,I_2$, where $I_1 \cap I_2 = T$, and for each strand $(a,b) \in \tau$ with $a<b$, we have $a \in I_1$ and $b \in I_2$.

\def\K{{\mathcal{K}}}
\def\L{{\mathcal{L}}}

We now prove Theorem \ref{thm:canbasis}(2).
\begin{proposition}
We have $\Delta_{(\tau,T)} = \H(\theta(\tau,T))$.  Thus Temperley-Lieb immanants are the dual canonical basis of $V(2\omega_k)$: 
\end{proposition}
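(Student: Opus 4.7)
My plan is to establish the identity
$$\Delta_I \Delta_J = \sum_{(\tau, T) \in \CC(I, J)} \H(\theta(\tau, T)) \qquad (\star)$$
for all $I, J \in \tbinom{[n]}{k}$. Combined with the defining identity of Theorem \ref{thm:LamTL}, the relation $(\star)$ shows that $\{\H(\theta(\tau, T))\}$ and $\{\Delta_{(\tau, T)}\}$ satisfy the same linear system in $R(k, n)_2$ parameterized by pairs $(I, J)$. Since the Temperley-Lieb immanants are uniquely determined by this system (the incidence matrix of $\CC$ has full column rank, which one can see by M\"obius inversion on noncrossing partitions as in \cite{LamTL}), equality $\Delta_{(\tau, T)} = \H(\theta(\tau, T))$ would then follow termwise.

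To prove $(\star)$, I would work through the Borel--Weil identification $R(k, n)_d \cong V(d\omega_k)^*$. The multiplication $\Delta_I \cdot \Delta_J$ is the image of the pure tensor $\Delta_I \otimes \Delta_J \in V(\omega_k)^{*\otimes 2}$ under the projection dual to the Cartan inclusion $V(2\omega_k) \hookrightarrow V(\omega_k)^{\otimes 2}$. Because $V(\omega_k) = \Lambda^k \C^n$ is minuscule, $\{\Delta_I\}$ is simultaneously the weight basis and the dual canonical basis of $V(\omega_k)^*$, so Kashiwara's tensor-product crystal rule on $B(\omega_k) \otimes B(\omega_k)$ computes the projection: it sends $\Delta_I \otimes \Delta_J$ to $\sum_S \H(S)$, where $S$ ranges over the tableaux in $B(2\omega_k)$ that arise as jeu-de-taquin rectifications of the skew two-column filling with column $I$ on the left and column $J$ on the right.

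The main obstacle is the combinatorial identification of these rectifications with $\{\theta(\tau, T) \mid (\tau, T) \in \CC(I, J)\}$. I would carry this out by interpreting $(I, J)$ as a bracket/marked-vertex configuration on $[n]$: elements of $I \setminus J$ become left brackets, elements of $J \setminus I$ become right brackets, and elements of $I \cap J$ become marked vertices. Each noncrossing pairing $\tau$ of $[n] \setminus (I \cap J)$ matching each left bracket to a right bracket produces, via a standard jeu-de-taquin slide, the SSYT with columns $\{\text{smaller endpoints}\} \cup (I \cap J)$ and $\{\text{larger endpoints}\} \cup (I \cap J)$, which is precisely $\theta(\tau, T)$ with $T = I \cap J$. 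This bijection (a classical type-A statement closely related to Lindstr\"om--Gessel--Viennot noncrossing lattice-path correspondences) is the technical core of the argument, and matching it with the tensor-product crystal projection requires the careful bookkeeping.
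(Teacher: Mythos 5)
Your overall strategy---prove the expansion $(\star)$ and then invert against Theorem~\ref{thm:LamTL} using triangularity of the compatibility matrix---is sound, and $(\star)$ itself is correct. The gap is in the proposed proof of $(\star)$, specifically in the sentence claiming that Kashiwara's tensor-product crystal rule ``sends $\Delta_I\otimes\Delta_J$ to $\sum_S\H(S)$, where $S$ ranges over the tableaux \ldots\ that arise as jeu-de-taquin rectifications.'' Two things go wrong here. First, jeu-de-taquin rectification of a fixed skew filling gives a \emph{unique} straight tableau (Sch\"utzenberger's theorem), so it cannot produce the multi-term right-hand side of $(\star)$. Concretely, in $\Gr(2,4)$ the product $\Delta_{13}\Delta_{24}$ expands as a sum of two dual canonical basis elements (indexed by the tableaux with columns $\{1,3\},\{2,4\}$ and $\{1,2\},\{3,4\}$), but the two-column filling with $I=\{1,3\}$ on the left and $J=\{2,4\}$ on the right is already semistandard, so its JDT rectification is a single tableau. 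Second, and more fundamentally, the crystal tensor-product rule controls the $q\to 0$ degeneration of the module, not the actual canonical/dual-canonical basis of the tensor product. Even in the minuscule case, where the canonical basis of each factor coincides with the weight basis, the (dual) canonical basis of $V(\omega_k)\otimes V(\omega_k)$ is \emph{not} the product basis; the transition matrix is governed by Kazhdan--Lusztig-type combinatorics. Knowing that the coefficients in $(\star)$ are all $0$ or $1$ is a genuine theorem, not a formal consequence of minusculity.

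This is precisely where the paper does real work: it cites Brundan's Theorem~26 for the projection $\xi:V(\omega_k)^{\otimes 2}\to V(2\omega_k)$ taking the CWZ dual canonical basis element $\L_{A,B}$ to $\H(T)$ when $(A,B)$ are the columns of a semistandard $T$ (and to $0$ otherwise), and taking the standard basis element $\K_{I,J}$ to $\Delta_I\Delta_J$; it then invokes the explicit Cheng--Wang--Zhang description (via the recursively defined sets $\Sigma^{-}_{A,B}$) of the coefficient of $\L_{A,B}$ in $\K_{I,J}$, and checks that $\Sigma^-_{A,B}$ is exactly the noncrossing matching $\theta^{-1}(A,B)$. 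Your sketch replaces this input with a crystal/JDT heuristic that does not compute the right thing. If you want a self-contained argument that avoids Brundan and CWZ, you would need an independent proof that the transition matrix between the monomial basis $\{\Delta_I\Delta_J\}$ and the dual canonical basis of $R(k,n)_2$ is $0$--$1$ with the stated combinatorial support; that is essentially the content of Theorem~\ref{thm:inverse}, which the paper proves by a sign-reversing involution rather than by crystal theory.
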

\begin{proof}
We deduce the proposition from setting $q=1$ in work of Brundan \cite{Bru} and Cheng-Wang-Zhang \cite{CWZ}.  In \cite[Section 4]{CWZ}, the dual canonical basis for $V(\omega_k) \otimes V(\omega_k)$ is constructed for any $k$.  The dual canonical basis elements are denoted $\L_f$ in \cite{CWZ}; we shall write them as $\L_{I,J}$ where $I, J$ are $k$-element subsets of $[n]$ (note that \cite{CWZ} are working with $n = \infty$).  The standard basis of $V(\omega_k) \otimes V(\omega_k)$ will be denoted by $\K_{I,J}$.

By \cite[Theorem 26]{Bru}, there is a linear map $\xi: V(\omega_k) \otimes V(\omega_k) \to V(2\omega_k)$ which sends $\L_{A,B}$ to $\H(T)$ if $A,B$ are the two columns of a semistandard tableau $T$ of shape $2^k$, and to $0$ otherwise.  In our notation, the map $\xi$ also sends the standard basis element $\K_{I,J}$ to the monomial $\Delta_I \Delta_J$.  By Theorem \ref{thm:LamTL}, it thus suffices to show that for a 2-column tableau $T$ with columns $A,B$, that the coefficient of $\L_{T}:=\L_{A,B}$ in $\K_{I,J}$ is equal to 1 or 0 depending on whether $\theta^{-1}(T)$ is compatible with $(I,J)$ or not.

Let $A,B$ be two $k$-element subsets of $[n]$.  Cheng-Wang-Zhang \cite{CWZ} define a set of pairs $\Sigma^-_{A,B} = \{(i,j)\}$ (denoted $\Sigma^-_f$ in \cite{CWZ}).  First, let $\A_{A,B}$ be the set of ordered pairs $(i,j)$, where $i \in A \setminus B$, $j \in B \setminus A$, and $i < j$.  Recursively define $\Sigma^{-,r}_{A,B}$ for $r \geq 1$ by
$$
\Sigma^{-,r}_{A,B}:= \left\{(i,j) \in \A_{A,B} \mid j - i = r \text{ and $i, j$ do not appear in }  \bigcup_{1 \leq \ell < r} \Sigma^{-,\ell}_{A,B} \right\}.
$$
We set $\Sigma^-_{A,B}:= \bigcup_{r \geq 1} \Sigma^{-,r}_{A,B}$.  It is then shown in \cite[Corollary 4.18]{CWZ}\footnote{The parameters $m$ and $n$ in \cite{CWZ} are both equal to $k$ for us.} that the coefficient of $\L_{A,B}$ in $\K_{I,J}$ is equal to 1 if $(I,J)$ can be obtained from $(A,B)$ by swapping the pairs in some subset $\Sigma \subseteq \Sigma^-_{A,B}$, and equal to 0 otherwise. 

Now, suppose that $A,B$ are the two columns of a semistandard tableaux $T$ of shape $2^k$.  It is then easy to check that $\Sigma^-_{A,B}$ is exactly the set of strands of the non-crossing matching of $\theta^{-1}(A,B)$.  This completes the proof.
\end{proof}

\subsection{Explicit formula for Temperley-Lieb invariants}
\def\bI {{\bf {I}}}
Call a pair $(I,J)$ {\it standard} if $I$ and $J$ form the two columns of a semistandard tableaux.  Recall that $\theta^{-1}(I,J) = (\tau(I,J),T = I \cap J)$ is a partial noncrossing matching.  


We shall need to consider pairs $(\bI,J)$ where $\bI$ is an {\it ordered} sequence $(i_1,i_2,\ldots,i_k)$ of distinct integers in $[n]$.  Let $\bar \bI \in \tbinom{[n]}{k}$ denote the $k$-element subset consisting of the elements of the sequence $\bI$. We say that $(\bI,J)$ is standard if $(\bar \bI,J)$ is.  We also have a matching $\tau(\bI,J):= \tau(\bar\bI,J)$.  We write $(\bI = (i_1,\ldots,i_a,\ldots,i_k), J) \to_a (\bI' = (i_1,\ldots,j,\ldots,i_k),J' = J \setminus \{j\} \cup \{i_a\})$ if $(i_a,j) \in \tau(\bI,J)$ and $(\bI',J')$ is a standard pair.  

A {\it legal path} $P$ of length $|P|=r$ between $(I,J)$ and $(K,L)$ is a sequence
$$
(\bI_0,J) \to_{a_1} (\bI_1,J_1) \to_{a_2} \cdots \to_{a_r} (\bI_r,J_r)
$$
where $\bI_0$ is equal to $I$ arranged in order, $\bar \bI_r = K$, and $a_1 \geq a_2 \geq \cdots \geq a_r$.  Note that $I \cup J = K\cup L$ as multisets whenever a legal path exists.

The following result can be deduced from \cite{CWZ}.  We give an independent proof.

\begin{theorem}\label{thm:inverse}
We have
\begin{equation}\label{eq:inverse}
\Delta_{(\tau,T)} = \sum_{(I,J)} \left(\sum_P (-1)^{|P|} \right) \Delta_I \Delta_J
\end{equation}
where the first summation is over all standard pairs $(I,J)$ and the second summation is over legal paths from $(I,J)$ to $\theta(\tau,T)$.
\end{theorem}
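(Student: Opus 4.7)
The plan is to substitute the expansion $\Delta_I \Delta_J = \sum_{(\tau',T') \in \CC(I,J)} \Delta_{(\tau',T')}$ from Theorem \ref{thm:LamTL} into the right-hand side of \eqref{eq:inverse}. Since the Temperley-Lieb basis is linearly independent, matching coefficients reduces the claim to the combinatorial identity
$$
\sum_{\substack{(I,J)\text{ standard}\\ (\tau',T') \in \CC(I,J)}} \sum_{P\colon (I,J) \to \theta(\tau,T)} (-1)^{|P|} = \delta_{(\tau,T),(\tau',T')}
$$
for every pair $(\tau,T), (\tau',T') \in \A_{k,n}$.

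My approach is to reinterpret this as matrix inversion and prove it via a sign-reversing involution. Let $M$ be the square matrix indexed by $\A_{k,n}$ (using $\theta$ to identify standard pairs with $\A_{k,n}$) with $M_{(I,J),(\tau,T)} = [(\tau,T) \in \CC(I,J)]$. The forward direction of Theorem \ref{thm:LamTL} is precisely the change-of-basis relation encoded by $M$, so the claim is equivalent to saying that the signed legal-path count matrix $A$ with $A_{(\tau,T),(I,J)} = \sum_P (-1)^{|P|}$ satisfies $M A = I$. Unfolding, this becomes the identity
$$
\sum_{(\tau,T) \in \CC(I,J)} \sum_{P\colon (I',J') \to \theta(\tau,T)} (-1)^{|P|} = \delta_{(I,J),(I',J')}.
$$
When $(I,J) = (I',J')$, the trivial (length-zero) path at $(\tau,T) = \theta^{-1}(I,J)$ contributes $+1$, and all other contributions should cancel; when $(I,J) \neq (I',J')$, all contributions should cancel. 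I would construct a sign-reversing involution on the set of pairs $((\tau,T), P)$ with $(\tau,T) \in \CC(I,J)$ and $P$ a legal path from $(I',J')$ to $\theta(\tau,T)$, pairing configurations of length $r$ with configurations of length $r \pm 1$.

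The main obstacle is defining this involution correctly. The natural move is to toggle whether the "final approach" to $\theta(\tau,T)$ is encoded as an additional swap step appended to $P$ (extending by one step at a position $\leq a_r$) or as a different choice of $(\tau,T) \in \CC(I,J)$ realizing the equivalent configuration; equivalently, one removes or inserts the final swap while compensating by updating $(\tau,T)$ accordingly. For this to be a well-defined involution, the extension/shortening must always produce a legal path (with monotonicity $a_1 \geq \cdots \geq a_r$ preserved) whose endpoint $\theta(\tau,T)$ still lies in $\CC(I,J)$, and the unique fixed point must be the trivial case $(I,J) = (I',J')$, $|P| = 0$, $(\tau,T) = \theta^{-1}(I,J)$. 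Verifying these properties requires a detailed analysis of how the canonical matchings $\tau(\bar\bI_t,J_t)$ evolve along legal paths and how non-canonical compatible matchings in $\CC(I,J)$ arise from reorientation of strands; this is the crux of the argument.
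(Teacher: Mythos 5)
Your reduction is exactly the paper's: after substituting Theorem~\ref{thm:LamTL} into the right-hand side and matching coefficients in the (linearly independent) Temperley--Lieb basis, the claim becomes that the compatibility matrix and the signed legal-path matrix are mutually inverse, and your $MA=I$ formulation is, up to relabeling, precisely the identity \eqref{eq:doublesum} in the paper. The gap is that you leave the sign-reversing involution unconstructed, and that construction is where essentially all the content of the proof lies. Your sketch does not specify which swap to append, under what condition to append rather than delete, or why either move preserves both the weak-decreasingness $a_1\ge\cdots\ge a_r$ of the indices and the compatibility of the new endpoint with the constraining pair; you yourself flag this as ``the crux,'' and it is.

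The paper's involution (in your notation, with path source $(I',J')$ and constraining pair $(I,J)$) runs as follows: given a legal path $P$ ending at $(\bC,D)$ with $\theta^{-1}(C,D)\in\CC(I,J)$, let $S(C,D)$ be the uniquely determined set of strands of $\tau(C,D)$ that must be swapped to pass from $(C,D)$ to $(I,J)$, and let $(c_i,d)$ be the strand of $S(C,D)$ of minimal row index $i$ in the current ordered sequence $\bC$. If $S(C,D)\neq\emptyset$ and either $P$ is empty or $i\le x$ (where $\to_x$ is the last step of $P$), then $\iota$ appends the swap $\to_i$; otherwise $\iota$ deletes the last step. Verifying that this is a well-defined, sign-reversing involution with no off-diagonal fixed points hinges on Lemmas~\ref{lem:nest} and~\ref{lem:legal}: Lemma~\ref{lem:nest} identifies exactly which two strands change under a single swap (the swapped strand and the one immediately nesting it), and Lemma~\ref{lem:legal} shows that along any legal path a strand at a smaller row index is never nested under one at a larger row index. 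Together these guarantee that appending the minimal swap keeps the endpoint compatible and that the append/delete cases are genuinely complementary. Supplying these nesting lemmas, or a substitute for them, is what your proposal still needs before it constitutes a proof.
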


\begin{example}
Let $k = 3$ and $n = 6$.  Then the standard pairs $(I,J)$ with $I \cup J = [6]$ are
$$
(123,456), (124,356), (134,256), (125,346), (135,246).
$$
The transition matrix from $\{\Delta_I \Delta_J\}$ to $\{\Delta_{\theta^{-1}(I,J)}\}$ and its inverse are
$$
\begin{bmatrix}
1&1&0&0&1\\
0&1&1&1&1\\
0&0&1&0&1\\
0&0&0&1&1\\
0&0&0&0&1
\end{bmatrix} 
\qquad
\text{and}
\qquad
\begin{bmatrix}
1&-1&1&1&-2\\
0&1&-1&-1&1\\
0&0&1&0&-1\\
0&0&0&1&-1\\
0&0&0&0&1
\end{bmatrix} 
$$
respectively.  Reading the last column of the right matrix, we get
$$
\Delta_{\theta^{-1}(135,246)} = \Delta_{135}\Delta_{246} - \Delta_{125}\Delta_{346} - \Delta_{134}\Delta_{256} + \Delta_{124}\Delta_{356} - 2 \Delta_{123}\Delta_{456}.
$$
The term $2\Delta_{123}\Delta_{456}$ arises from the two legal paths $$((1,2,3),\{456\}) \to_{2} ((1,5,3),\{246\})$$ $$((1,2,3),\{456\}) \to_3 ((1,2,4), \{356\}) \to_3 ((1,2,5),\{346\}) \to_2 ((1,3,5),\{246\}).$$  Note that the path  $$((1,2,3),\{456\}) \to_3 ((1,2,4), \{356\}) \to_2 ((1,3,4),\{256\}) \to_3 ((1,3,5),\{246\})$$ is not legal, because the sequence $3,2,3$ is not weakly decreasing.
\end{example}

If $(i < j), (r < s) \in \tau$ are two strands of a noncrossing matching, we say that $(i,j)$ is nested under $(r,s)$ if $(r < i < j < s)$.  We say that $(i,j)$ is nested immediately under $(r,s)$ if, in addition, there is no strand $(p,q)$ such that $(i,j)$ is nested under $(p,q)$, and $(p,q)$ is nested under $(r,s)$.

\begin{lemma}\label{lem:nest}
Suppose that $(\bI,J) \to_a (\bI',J')$, where $(i_a,j)$ is the strand swapped.  Then there is a unique $(i_b,j') \in \tau(\bI,J)$ such that $(i_a,j)$ is nested immediately under $(i_b,j')$.  Furthermore, $\tau(\bI',J')$ is obtained from $\tau(\bI,J)$ by replacing the two strands $(i_a,j)$ and $(i_b,j')$ by the two strands $(i_b,i_a)$ and $(j,j')$.
\end{lemma}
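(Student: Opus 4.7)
The plan is to work with the depth (ballot) function $\delta(m) := |I \cap [m]| - |J \cap [m]|$ on $m \in \{0,1,\ldots,n\}$. Standardness of $(I,J)$ is equivalent to $\delta(m) \ge 0$ for all $m$, and by the bijection $\theta$, the matching $\tau(\bI,J)$ is realized concretely as the left-to-right parenthesis matching in which $I$-only positions act as opens and $J$-only positions as closes (with $T$-vertices and positions outside $I \cup J$ ignored). In particular every strand $(i_a,j) \in \tau$ satisfies $i_a < j$ with $i_a \in I \setminus T$ and $j \in J \setminus T$. A direct computation shows that passing from $(I,J)$ to $(I',J')$ changes the depth function into $\delta'(m) = \delta(m) - 2$ for $m \in [i_a, j-1]$ and $\delta'(m) = \delta(m)$ otherwise; hence $(\bI',J')$ is standard if and only if $\delta(m) \ge 2$ on $[i_a, j-1]$.

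For the first assertion (existence and uniqueness of the immediate cover), I would observe that because $(i_a,j)\in\tau$, the sub-strands nested strictly inside $(i_a,j)$ contribute balanced depth excursions, so that $\min_{m \in [i_a,j-1]} \delta(m) = \delta(i_a-1) + 1$, attained at $m=i_a$ and $m=j-1$. The standardness condition derived above thus becomes $\delta(i_a-1) \ge 1$, meaning at least one opener remains unmatched at the end of the parenthesis scan up to position $i_a-1$. Let $i_b$ be the rightmost such unmatched opener, matched in $\tau(\bI,J)$ to some $j'$. Since $i_b < i_a$ and the matching is noncrossing, $j' > j$, so $(i_b, j')$ contains $(i_a,j)$; by the maximality of $i_b$ no strand sits strictly between $(i_a,j)$ and $(i_b,j')$, giving the unique immediate cover.

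For the description of $\tau(\bI',J')$, set $\tau' := (\tau(\bI,J) \setminus \{(i_a,j), (i_b,j')\}) \cup \{(i_b,i_a), (j,j')\}$. The swap converts $i_a$ from $I$-only to $J'$-only and $j$ from $J$-only to $I'$-only, leaving every other vertex label unchanged, so the two new strands have opener smaller than closer and the surviving old strands still pair $I'$-only with $J'$-only. The only potentially bad configuration for noncrossingness of $\tau'$ is a surviving strand $(p,q)$ with $i_b < p < i_a$ and $j < q < j'$, which would cross both new strands while having crossed neither $(i_a,j)$ nor $(i_b,j')$. Such a strand would place $(i_a,j)$ nested under $(p,q)$ nested strictly under $(i_b,j')$, contradicting the ``immediately under'' hypothesis from Step 1. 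Since $\tau'$ is then a noncrossing matching of $I'$-only with $J'$-only with smaller endpoint in $I'$, it coincides with the parenthesis matching of $(\bI',J')$ by uniqueness, so $\tau'=\tau(\bI',J')$. The main obstacle is the noncrossingness verification in this step, and the ``immediately under'' hypothesis extracted in Step 1 is exactly what is needed to rule out the one bad configuration.
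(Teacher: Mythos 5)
Your proof is correct and takes essentially the same approach as the paper's, which compresses the argument into two sentences; the depth (ballot) function you introduce simply makes the paper's ``it is easy to see'' steps explicit. In particular, your computation that standardness of $(\bI',J')$ forces $\delta(i_a-1)\ge 1$ is the precise version of the paper's contrapositive observation that an unnested $(i_a,j)$ cannot be legally swapped, and the single potentially bad crossing you rule out via the ``immediately under'' hypothesis is exactly the content of the paper's claim that the replacement yields a noncrossing matching.
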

\begin{proof}
If $(i_a,j)$ is not nested under any other strand, then swapping $i_a$ with $j$ in $(\bar I, J)$ cannot give a standard pair.  This gives the first statement.  It is easy to see that replacing $(i_a,j)$ and $(i_b,j')$ by the two strands $(i_b,i_a)$ and $(j,j')$ does indeed give a noncrossing matching, and the second statement follows.
\end{proof}

\begin{lemma}\label{lem:legal}
Suppose that we have a legal path ending at $(\bI = (i_1,i_2,\ldots,i_k),J)$.  Suppose that $a < b$ and that $(i_a,j)$ and $(i_b,j')$ are both in $\tau(\bI,J)$.  Then $(i_a,j)$ is never nested under $(i_b,j')$.
\end{lemma}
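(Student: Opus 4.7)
The plan is to induct on the length $r$ of the legal path. The base case $r=0$ is immediate: $\bI_0$ is $I$ arranged in order, so $i_a < i_b$ for $a<b$, and $(i_a,j)$ nested under $(i_b,j')$ would demand $i_b < i_a$, a contradiction.

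For the inductive step, suppose the lemma holds at $(\bI, J)$ reached by a legal path of length $r$, and extend by $\to_p$ with $p := a_{r+1} \le a_r$, reaching $(\bI', J')$. Lemma \ref{lem:nest} supplies the combinatorial structure of the move: it uses the strand $(i_p,j)$ at position $p$ and its unique immediate nester $(i_d,j')$ at some other position $d$, and $\tau(\bI',J')$ is obtained from $\tau(\bI,J)$ by replacing these two strands with $(i_d,i_p)$ at $d$ and $(j,j')$ at $p$, leaving all other strands untouched. Applying the inductive hypothesis to the pair $(i_p,j),(i_d,j')$ immediately forces $d<p$ (otherwise the strand at $p$ would be nested under the strand at $d>p$, violating the lemma at $(\bI,J)$).

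The crucial observation is a monotonicity consequence of legality: since $a_1 \ge a_2 \ge \cdots \ge a_{r+1}=p$, every earlier move took place at a position $\ge p$. Because a move changes only the entry of $\bI$ at its move position, and replaces it by a strictly larger element of $J$ (a strand $(i_\ell,j)$ always satisfies $i_\ell<j$), each $i_\ell$ with $\ell<p$ still equals its initial sorted value $i_\ell^{(0)}$. In particular, for every $\ell<p$ one has $i_\ell = i_\ell^{(0)} < i_p^{(0)} \le i_p$.

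I would then verify the lemma at $(\bI', J')$ by a case analysis on pairs $(a,b)$ with $a<b$, organized by which of $a,b$ lie in $\{d,p\}$. Pairs disjoint from $\{d,p\}$ inherit the lemma. The pair $(d,p)$ is immediate from $i_d < i_p < j$. For a pair $(q,d)$ with $q<d$, or a pair $(d,q)$ or $(p,q)$ with $q>p$, a supposed nesting can be combined with non-crossing in $\tau(\bI,J)$ to produce a nesting of two original strands already forbidden by the inductive hypothesis. The genuine obstacle is the pair $(q,p)$ with $q<p$, and the analogous subcase of $(d,q)$ with $d<q<p$: in $(\bI,J)$ the strand $(i_q,j'')$ sits as a ``right sibling'' of $(i_p,j)$ under $(i_d,j')$, a configuration not forbidden by the lemma itself. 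The monotonicity observation closes both cases: the nesting $(i_q,j'') \subset (j,j')$ would require $j<i_q$, but $i_q = i_q^{(0)} < i_p < j$; and the nesting $(i_d,i_p) \subset (i_q,j'')$ would require $i_q<i_d$, but $d<q<p$ forces $i_d = i_d^{(0)} < i_q^{(0)} = i_q$. Thus the only real work lies in the monotonicity observation; the case analysis is then essentially mechanical.
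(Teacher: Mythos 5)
Your proof is correct and follows essentially the same route as the paper: induction on the path length, Lemma \ref{lem:nest} to control the last swap, and the observation that legality (weakly decreasing swap positions) leaves every entry at a position below the last swap frozen at its initial sorted value. The paper's version is more compressed because it first reduces to the case $i_a > i_b$, which forces $a_r \le a$ and collapses most of your case analysis into the single dichotomy $a_r < a$ versus $a_r = a$, but the underlying reasoning is identical.
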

\begin{proof}
Let $P = (\bI_0,J) \to_{a_1} (\bI_1,J_1) \to_{a_2} \cdots \to_{a_r} (\bI_r,J_r)$ be a legal path ending at $(\bI_r, J_r) = (\bI = (i_1,i_2,\ldots,i_k),J)$, and suppose $a < b$.   We proceed by induction on $r$.  If $r = 0$, the claim is clear.  If $i_a < i_b$, the claim is clear.  Thus we may assume that $i_a > i_b$ and $a_r \leq a$.  If $a_r < a$, then by induction and Lemma \ref{lem:nest}, the last swap $\to_{a_r} (\bI_r,J_r)$ does not affect the strands $(i_a,j)$ and $(i_b,j')$ incident to $i_a$ and $i_b$.  

Finally, suppose that $a_r = a$, and let $(\bI',J') = (\bI_{r-1},J_{r-1})$.  There are two cases: $i'_a < i'_b$ and $i'_a > i'_b$.  In the first case, the claim follows from Lemma \ref{lem:nest}, and in the second case, the claim follows from the inductive assumption and Lemma \ref{lem:nest}.
\end{proof}

\begin{proof}[Proof of Theorem \ref{thm:inverse}]
Any $i \in T$ is present in both $I$ and $J$ for all terms on the RHS.  Thus it suffices to prove the statement assuming that $T = \emptyset$ and $\tau$ is a complete noncrossing matching on $[n] = [2k]$.  Henceforth, we make this assumption; thus we restrict to standard pairs $(I,J)$ using each element in $[2k]$ exactly once.  Restricting Theorem \ref{thm:LamTL} to these standard pairs, we must show that \eqref{eq:TL} and \eqref{eq:inverse} give inverse matrices.  

Define a partial order $\leq$ on standard pairs by $(I,J) \leq (C,D)$ if $i_r \leq c_r$ for $r=1,2,\ldots,k$, where $I = \{i_1 < i_2 < \cdots < i_k\}$ and $C = \{c_1 < c_2 < \cdots < c_k\}$.  A legal path from $(I,J)$ to $(C,D)$ exists only if $(I,J) \leq (C,D)$.  Also $\theta^{-1}(C,D)$ is compatible with $(K,L)$ only if $(C,D) \leq (K,L)$.  The transition matrices from \eqref{eq:TL} and \eqref{eq:inverse} are triangular with respect to this partial order.  Thus let $(I,J) \leq (K,L)$ be given.  We must show that 
\begin{equation}\label{eq:doublesum}
\sum_{(C,D)} \sum_P (-1)^{|P|} = \begin{cases} 1 & \mbox{if $(I,J) = (K,L)$,} \\
0 & \mbox{if $(I,J) < (K,L)$,}
\end{cases} 
\end{equation}
where the first summation is over all standard pairs $(C,D)$ such that $\theta^{-1}(C,D) \in \CC(K,L)$, and the second summation is over all legal paths $P$ from $(I,J)$ to $(C,D)$.  The statement is clear when $(I,J) = (K,L)$.  

Suppose $(I,J) < (K,L)$.  We provide a sign-reversing involution $\iota$ on the terms in \eqref{eq:doublesum}.  If $\tau(C,D)$ is compatible with $(K,L)$ we can obtain $(K,L)$ from $(C,D)$ by swapping some (uniquely determined) subset $S(C,D)$ of the strands in $\tau(C,D)$.  Let a legal path $P = \cdots (\bC'',D'') \to_x (\bC,D)$ from $(I,J)$ to $(C,D)$ be given, where $\bC = (c_1,c_2,\ldots,c_k)$.  With respect to $(\bC,D)$, the {\it minimum strand} $(c_i,d)$ in $S(C,D)$ is the strand where $i$ is minimal.  We define $\iota(P)$ by splitting into two cases.  

\noindent Case (1): If $|S(C,D)| >0$ and $(c_i,d)$ is the minimal strand, and either 
\begin{enumerate}
\item
$P$ is empty, or
\item 
$P$ is nonempty and $i \leq x$, 
\end{enumerate}
then $\iota(P) = P \to_i (\bC',D')$ is the path obtained by concatenating to $P$ the swap $(\bC,D) \to_i (\bC',D')$.  Note that $\tau (\bar \bC',D')$ is still compatible with $(K,L)$: if $(c_i,d)$ is nested under a strand $(c_s,d')$, then by Lemmas \ref{lem:nest} and \ref{lem:legal}, we cannot have $(c_s,d') \in S(C,D)$.  Thus $S(\bar \bC',D') = S(C,D) \setminus (c_i,d)$.

\noindent Case (2): If either 
\begin{enumerate}
\item $|S(C,D)| = 0$ (that is, $(C,D) = (I,J)$), or
\item $|S(C,D)|>0$ with $(c_i,d)$ minimal strand, and $P$ is nonempty and $i > x$,
\end{enumerate}
then $\iota(P)$ is obtained from $P$ by removing the last swap, so that $\iota(P)$ now ends at $(\bC'',D'')$.  By Lemmas \ref{lem:nest} and \ref{lem:legal} again, note that $\tau(\bC'',D'')$ is compatible with $(I,J)$, and we have $S(\bar \bC'',D'') = S(C,D) \cup \{(p,q)\}$ where $(p,q)$ is the last swap in $P$.

Finally, it is straightforward to verify that $\iota$ is an involution and that $(-1)^{|\iota(P)|} = -(-1)^{|P|}$.
\end{proof}

\section{Schubert varieties and positroid varieties}\label{sec:Schubpos}

\subsection{Schubert varieties}\label{sec:Schub}
Let $I \in \tbinom{[n]}{k}$ be a $k$-element subset of $[n]$.  Let $F_\bullet = \{ 0 = F_0 \subset F_1 \subset \cdots F_{n-1} \subset F_n = \C^n\}$ be a flag in $\C^n$, so that $\dim F_i = i$.  
The Schubert cell $\oX_I(F_\bullet)$ is given by
\begin{equation}\label{eq:schub}
\oX_I(F_\bullet) \coloneqq \{X \in \Gr(k,n) \mid \dim(X \cap F_j) =\#(I \cap [n-j+1,n]) 
\text{ for all } j \in [n]\}.
\end{equation}
The Schubert variety $X_I(F_\bullet)$ is given by
\begin{equation}
X_I(F_\bullet) \coloneqq \{X \in \Gr(k,n) \mid \dim(X \cap F_j) \geq \#(I \cap [n-j+1,n]) 
\text{ for all } j \in [n]\}.
\end{equation}
We have $X_I(F_\bullet) = \overline{\oX_I(F_\bullet)}$.  Also, $X_{[k]}(F_\bullet) = \Gr(k,n)$ and $\codim(X_I(F_\bullet)) = i_1+i_2+\cdots+i_k - (1+2+\cdots+k)$, where $I = \{i_1,i_2\ldots,i_k\}$.  Here and elsewhere, we always mean complex (co)dimension when referring to complex subvarieties.

Let $E_\bullet$ be the standard flag defined by $E_i = \spn(e_n,e_{n-1},\ldots,e_{n-i+1})$.  Then we set the standard Schubert varieties to be $X_I \coloneqq X_I(E_\bullet)$.  Suppose $v_1,v_2,\ldots,v_n$ are the columns of a $k\times n$ matrix (with respect to the basis $e_1,e_2,\ldots,e_n$) representing $X \in \Gr(k,n)$.  Then the condition $\dim(X \cap E_j) = d$ is equivalent to the condition $\dim \spn(v_1,\ldots,v_{n-j}) = k-d$.  Thus the Schubert variety $X_I(E_\bullet)$ is cut out by rank conditions on initial sequences of columns of $X$.

\subsection{Bounded affine permutations, Grassmann necklaces, and positroids}

A \defn{$(k,n)$-bounded affine permutation} is a bijection $f: \Z \to \Z$ satisfying conditions:
\begin{enumerate}
\item $f(i+n) = f(i) + n$,
\item $\sum_{i=1}^n f(i) = \binom{n+1}{2}+kn$,
\item
$i \leq f(i) \leq i+n$.
\end{enumerate}
The set $\Bound(k,n)$ of $(k,n)$-bounded affine permutations forms a lower order ideal in the Bruhat order of the affine symmetric group  (\cite{KLS}).  

Let $I =\{i_1<i_2<\cdots<i_k\}$ and $J =\{j_1<j_2<\cdots <j_k\}$ be two $k$-element subsets of $[n]$.  We define a partial order  $\leq$ on $\tbinom{[n]}{k}$ by $I \leq J$ if $i_r \leq j_r$ for $r=1,2,\ldots,k$. We write $\leq_a$ for the cyclically rotated ordering $a <_a a+1 <_a \cdots <_a n <_a 1 <_a \cdots <_a a-1$ on $[n]$.  Replacing $\leq$ by $\leq_a$, we also have the cyclically rotated version partial order $I \leq_a J$ on $\tbinom{[n]}{k}$.

A \defn{$(k,n)$-Grassmann necklace} \cite{Pos} is a collection of $k$-element subsets $\I = (I_1,I_2,\ldots,I_n)$ satisfying the following property: for each $a \in [n]$:
\begin{enumerate}
\item
$I_{a+1} = I_a$ if $a \notin I_a$
\item
$I_{a+1} = I_a -\{a\} \cup \{a'\}$ if $a \in I_a$.
\end{enumerate}
There is a partial order on the set of $(k,n)$-Grassmann necklaces, given by $\I \leq \J$ if $I_a \leq_a J_a$ for all $a =1,2,\ldots,n$.

Given $f \in \Bound(k,n)$, we define a sequence $\I(f) = (I_1,I_2,\ldots,I_n)$ of $k$-element subsets by the formula
$$
I_a = \{f(b) \mid b < a \text{ and } f(b) \geq a\} \mod n
$$
where $\mod n$ means that we take representatives in $[n]$.  For example, let $k =2$, $n = 6$, and $f = [246759]$.  Then $\I(f) = (13,23,34,46,16,16)$.

\begin{prop}\label{prop:Grassorder}
The map $f \mapsto \I(f)$ is a bijection between $(k,n)$-bounded affine permutations and $(k,n)$-Grassmann necklaces.  
\end{prop}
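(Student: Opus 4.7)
The plan is to check that $\I(f)$ is a $(k,n)$-Grassmann necklace and then construct an explicit inverse map.

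First I would verify $\I(f)$ satisfies the necklace axioms. Fix $a$. Because $f(b) \leq b + n$, any $b < a$ with $f(b) \geq a$ lies in the window $[a-n, a-1]$, so $I_a$ is defined from a window of $n$ consecutive integers; moreover, two distinct $b$'s in that window cannot produce the same residue $f(b) \bmod n$, since otherwise $f(b_1)$ and $f(b_2)$ would differ by a nonzero multiple of $n$, contradicting $f(b_i) \in [a, a+n-1]$. Comparing $I_a$ with $I_{a+1}$, the only possible changes are adding $b = a$ (happens iff $f(a) \geq a+1$) and removing the unique $b \in [a-n, a-1]$ with $f(b) = a$ (if any). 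The boundedness inequality $a \leq f(a) \leq a+n$ then forces the clean dichotomy: either $f(a) = a$, whence $a \notin I_a$ and $I_{a+1} = I_a$; or $f(a) > a$, whence $a \in I_a$ (witnessed by $f^{-1}(a) \in [a-n, a-1]$) and $I_{a+1} = (I_a \setminus \{a\}) \cup \{f(a) \bmod n\}$. This is exactly the Grassmann necklace recursion. To pin down $|I_a| = k$, I would write $f(b) = \sigma(b) + n\, e(b)$ for $b \in [n]$ with $\sigma\colon [n] \to [n]$ a permutation and $e(b) \in \{0,1\}$; then $\sum_{b=1}^n f(b) = \binom{n+1}{2} + n \cdot \#\{b : e(b) = 1\}$, and condition (2) forces $\#\{b : e(b) = 1\} = k$, which is exactly $|I_1|$. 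The transition analysis above propagates this to all $a$.

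For the inverse, given a necklace $\I = (I_1, \ldots, I_n)$, I define $f$ on $[n]$ by $f(a) = a$ when $a \notin I_a$, and otherwise by letting $f(a)$ be the unique lift of the entering element $a' \in I_{a+1} \setminus I_a$ to $[a+1, a+n]$; then extend by $f(a+n) = f(a) + n$. Conditions (1) and (3) are immediate from the construction, and condition (2) follows by running the residue/winding computation in reverse using $|I_a| = k$. The main obstacle is bijectivity of $f$, which by periodicity reduces to showing $\sigma := f \bmod n$ is a permutation of $[n]$. Each $c \in [n]$ is hit by $\sigma$ either as a fixed point ($c \notin I_c$ forces $\sigma(c) = c$) or as an entering element $a' = c$ of some transition $I_a \to I_{a+1}$. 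Injectivity then follows from a balancing argument: $c$ can exit a necklace set only at step $a = c$, so cyclicity $I_{n+1} = I_1$ forces each non-fixed-point $c$ to both enter and exit exactly once, giving uniqueness of the $a$ with $a' = c$.

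Finally, the two maps are mutually inverse by construction: the dichotomy in the first paragraph shows that applying $f \mapsto \I(f)$ and then the inverse recovers $f$ on $[n]$ and hence on all of $\Z$, while the necklace recursion shows the other composition is the identity on $\I$. I expect the subtlest step to be the balancing/uniqueness argument showing $\sigma$ is a permutation, since this is where the global cyclic structure of the necklace (the closure $I_{n+1} = I_1$) must be combined with the local transition rule.
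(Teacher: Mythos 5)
Your proof is correct and takes the same route the paper indicates: the paper simply exhibits the inverse map $\I \mapsto f(\I)$ (defining $f(a)=a$ when $a\notin I_a$, and $f(a)$ the lift of $a'$ to $(a,a+n]$ otherwise) without further verification, and your construction of the inverse coincides with it. You have additionally supplied the checks the paper omits (that $\I(f)$ satisfies the necklace axioms with $|I_a|=k$, that the constructed $f$ lies in $\Bound(k,n)$, and the balancing argument for bijectivity), all of which are sound.
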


The inverse map $\I \mapsto f(\I)$ is given as follows.  Suppose $a \notin I_a$.  Then define $f(a) = a$.  Suppose $a \in I_a$ and $I_{a+1} = I_a -\{a\} \cup \{a'\}$.  Then define $f(a) = b$ where $b \equiv a' \mod n$ and $a < b \leq a+n$.  

The Bruhat order on $\tbinom{[n]}{k}$ is given by $I = \{i_1< i_2 < \cdots < i_k\} \leq J = \{j_1<j_2 < \cdots <j_k\}$ if and only if $i_r \leq j_r$ for all $r$.  For $I \in \tbinom{[n]}{k}$, the Schubert matroid $\M_I$ is by definition the collection
$$
\M_I:=\{J \geq I \mid J \in \tbinom{[n]}{k}\}.
$$
It indexes the set of Pl\"ucker coordinates $\Delta_J$ that do not vanish on $X_I$.

Let $\I$ be a $(k,n)$-Grassmann necklace.  The \defn{positroid} $\M(\I)$ of $\I$ is the rank $k$ matroid on $n$ elements given by 
\begin{equation}\label{eq:positroid}
\M(\I) := \M_{I_1} \cap \chi(\M_{\chi^{-1}(I_2)}) \cap \cdots \cap \chi^{n-1}(\M_{\chi^{1-n}(I_n)}).
\end{equation}
If $\I = \I(f)$ then we write $\M(f)$ for $\M(\I)$.

\subsection{Positroid varieties}
%

Let the generator $\chi$ of the cyclic group $\Z/n\Z$ act on $[n]$ by the formula $\chi(i) = i+1 \mod n$. 
Then $\chi$ also acts on subsets of $[n]$.  

Define the \defn{positroid variety} $\Pi_f$ 
\begin{equation}\label{eq:posdef}
\Pi_f = \Pi_\I := \X_{I_1} \cap \chi(\X_{\chi^{-1}(I_2)}) \cap \cdots \cap \chi^{n-1}(\X_{\chi^{1-n}(I_n)})
\end{equation}
where $\I= \I(f)$ and the \defn{open positroid variety} $\oPi_f$
$$
\oPi_f = \oPi_\I := \oX_{I_1} \cap \chi(\oX_{\chi^{-1}(I_2)}) \cap \cdots \cap \chi^{n-1}(\oX_{\chi^{1-n}(I_n)}).
$$
%
%
%

By \cite{KLS,KLS2}, the restriction map $\Gamma(\Gr(k,n),\O(d)) \to \Gamma(\Pi_f,\O(d))$ is surjective, where $\O(1)$ is the line bundle on $\Gr(k,n)$ associated to the Pl\"ucker embedding (and in particular, $\Gamma(\Pi_f,\O(d)) = 0$ for $d < 0$).  Thus the homogeneous coordinate ring $R(\Pi_f) := \bigoplus_{d \geq 0} \Gamma(\Pi_f,\O(d))$ of $\Pi_f$ is a quotient of the homogenous coordinate ring $R(k,n)$.  We write $\I(\Pi_f)$ for the homogeneous ideal of $\Pi_f$ and denote by $\hPi_f := \Spec(R(\Pi_f)) \subset \hGr(k,n)$ the affine cone over the positroid variety $\Pi_f$.

Recall that for $X \in \Gr(k,n)$, the matroid $\M_X$ of $X$ is defined as
$$
\M_{X}:= \{J \in \tbinom{[n]}{k} \mid \Delta_J(X) \neq 0\}.
$$
Define $\Pi_{f,>0}:= \oPi_f \cap \Gr(k,n)_{\geq 0}$.  The following result of Oh characterizes the matroids of totally nonnegative points.
\begin{theorem}[\cite{Oh}] \label{thm:Oh}
For any $X \in \Pi_{f,>0}$, we have $\M_X = \M(f)$.
\end{theorem}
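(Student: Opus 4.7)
The plan is to prove both containments $\M_X \subseteq \M(f)$ and $\M_X \supseteq \M(f)$ separately. Only the second uses positivity; the first holds for every $X \in \Pi_f$.

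For the first containment, definition \eqref{eq:posdef} gives $\Pi_f \subseteq \chi^{a-1}(\X_{\chi^{1-a}(I_a)})$ for every $a \in [n]$. A classical fact for Grassmannian Schubert varieties, already implicit in the definition of $\M_I$, is that $\Delta_L$ vanishes identically on $\X_K$ whenever $L \notin \M_K$ (i.e.\ $L \not\geq K$). Using the pullback rule $\chi^*\Delta_L = \pm \Delta_{\chi^{-1}(L)}$ on degree-one Pl\"uckers, it follows that $\Delta_J \equiv 0$ on $\chi^{a-1}(\X_{\chi^{1-a}(I_a)})$ unless $J \in \chi^{a-1}(\M_{\chi^{1-a}(I_a)})$. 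Intersecting over $a$ and comparing with \eqref{eq:positroid} gives $\M_X \subseteq \M(f)$ for every $X \in \Pi_f$.

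For the second containment, fix $J \in \M(f)$ and $X \in \Pi_{f,>0}$; we must show $\Delta_J(X) > 0$. The natural tool is Postnikov's parametrization of $\Pi_{f,>0}$ by a reduced plabic graph $G$ of trip permutation $f$: the boundary measurement map $\Phi_G \colon \R^{F(G)}_{>0} \to \Pi_{f,>0}$ is surjective (indeed a homeomorphism), and by the Postnikov--Talaska formula each $\Delta_J \circ \Phi_G$ is a subtraction-free rational expression in the face weights whose numerator is a sum, over almost perfect matchings of $G$ with boundary $J$, of monomials with positive coefficients. A combinatorial theorem of Postnikov identifies the set of $J$ for which such a matching exists with the positroid $\M(f)$. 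Hence for $J \in \M(f)$ the numerator is strictly positive on $\R^{F(G)}_{>0}$, giving $\Delta_J(X) > 0$.

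The principal obstacle is assembling the plabic-graph input: existence of a reduced plabic graph with trip permutation $f$, surjectivity and positivity of its boundary measurement map onto $\Pi_{f,>0}$, and the combinatorial identification of its Pl\"ucker support with $\M(f)$. These are nontrivial combinatorial theorems of Postnikov and form the real content of the proof. A more elementary alternative, closer to Oh's original approach, is to produce a single explicit totally positive representative of $\Pi_{f,>0}$ from the $\Gamma$-diagram of $f$, verify by hand that its matroid equals $\M(f)$, and propagate this using a connectedness argument for $\Pi_{f,>0}$ together with the first containment and nonnegativity of Pl\"uckers on $\Gr(k,n)_{\geq 0}$.
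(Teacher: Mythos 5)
The paper does not prove this theorem; it is quoted from Oh~\cite{Oh} as a black-boxed input, so there is no in-paper proof to compare against. Assessing your sketch on its own merits: the first containment $\M_X\subseteq\M(f)$ for every $X\in\Pi_f$ is correct and is exactly the easy direction, following from the definition \eqref{eq:posdef}, the classical fact that $\Delta_J$ vanishes identically on $X_K$ precisely when $J\not\geq K$, and the pullback rule $\chi^*\Delta_J=\pm\Delta_{\chi^{-1}(J)}$.

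The gap is in your second containment. The claim you invoke, that ``a combinatorial theorem of Postnikov identifies the set of $J$ for which such a matching exists with the positroid $\M(f)$,'' is, once $\M(f)$ means the intersection formula \eqref{eq:positroid}, precisely Oh's theorem. Postnikov establishes the plabic-graph parametrization of $\Pi_{f,>0}$, the subtraction-free boundary-measurement/matching formula, and the constancy of $\M_X$ on the cell; he does not prove that this common matroid equals $\bigcap_a\chi^{a-1}(\M_{\chi^{1-a}(I_a)})$, which is exactly Oh's contribution. Citing that identification as known therefore assumes the conclusion, and the misattribution to Postnikov obscures this. Your alternative route has an analogous problem in the propagation step: connectedness of $\Pi_{f,>0}$, nonnegativity of Pl\"uckers, and $\M_X\subseteq\M(f)$ do not force $\M_X$ to be constant on the cell, since $\{X:\Delta_J(X)>0\}$ is merely an open subset and a connected space can have a proper nonempty open subset. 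You would still need the Postnikov fact that $\M_X$ is constant on positroid cells, after which the residual and essential task is to exhibit one $\Gamma$-diagram representative $X_0$ and verify $\M_{X_0}=\M(f)$ combinatorially; that computation is the heart of Oh's proof and should be carried out rather than deferred.
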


\section{The cyclic Demazure module}
\label{sec:cyclicDem}
\subsection{Demazure modules and Demazure crystals}
%
Let $\I(X_I) \subset R(k,n)$ denote the homogeneous ideal of the Schubert variety $X_I$ (see Section \ref{sec:Schub}) and let $\I(X_I)_d \subset R(k,n)_d$ denote the degree $d$ component.  Let $R(X_I)_d = \Gamma(X_I,\O(d))$ denote the degree $d$ part of the homogeneous coordinate ring of $X_I$.  The restriction map $\Gamma(\Gr(k,n),\O(d)) \to \Gamma(X_I,\O(d))$ is known to be surjective, and thus the space $R(X_I)_d$ is naturally a quotient of $R(k,n)_d=V(d\omega_k)^*$.

For $I \in \tbinom{[n]}{k}$, we have an extremal weight vector $G(T_I) \in V(d\omega_k)$.  The vector $G(T_I)$ spans the weight space of $V(d\omega_k)$ with weight $\alpha = (\alpha_1,\ldots,\alpha_n)$ given by $\alpha_i = d$ if $i \in I$ and $\alpha_i = 0$ otherwise.  The \defn{Demazure module} $V_I(d\omega_k)$ is defined to be the $B_-$-submodule of $V(d\omega_k)$ generated by the vector $G(T_I)$.  It is a classical result that $\I(X_I)_d$ can be identifed with $ V_{I}(d\omega_k)^\perp \subset V(d\omega_k)^*$ (see for example \cite[Chapter 8]{Kum}).

\medskip

For $I \in \tbinom{[n]}{k}$, we have a tableau $T_I \in B(d\omega_k)$ with all entries in the $r$-th row equal to $i_r$, where $I = \{i_1,i_2,\ldots,i_k\}$.  The canonical basis vector $G(T_I)$ is an extremal weight vector of $V(d\omega_k)$.  Define the \defn{Demazure crystal} $B_I(d\omega_k) \subset B(d\omega_k)$ to be the subset of $B(d\omega_k)$ obtained by repeatedly applying the operators $\tf_1,\tf_2,\ldots,\tf_{n-1}$ to $T_I$.

The following result is due to Kashiwara \cite{Kasdem}.
\begin{theorem}\label{thm:Kas}
The $B_-$-submodule $V_I(d\omega_k)$ has a basis $\{G(T) \mid T \in B_I(d\omega_k)\}$.
\end{theorem}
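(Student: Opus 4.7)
The plan is to prove the equality $V_I(d\omega_k) = W_I$, where $W_I := \spn\{G(T) : T \in B_I(d\omega_k)\}$; linear independence of the $G(T)$'s for $T \in B_I$ is inherited for free from the full canonical basis of $V(d\omega_k)$, so only the two inclusions must be established. The central tool is the compatibility between the canonical basis and $\sl_2$-strings under each Chevalley generator $f_i$: for every $T \in B(d\omega_k)$, the canonical-basis expansion
\[
f_i \, G(T) \;=\; \sum_{T'} c^i_{T,T'}\, G(T')
\]
has nonnegative integer coefficients $c^i_{T,T'}$; every $T'$ with $c^i_{T,T'}\neq 0$ lies in the $i$-string of $T$; and the coefficient of $G(\tf_i T)$ is nonzero whenever $\tf_i T \neq 0$. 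This is the $q=1$ specialization of a well-known structural property established for $U_q(\sl_n)$ by Lusztig and Kashiwara, and it is the engine of both inclusions.

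For the inclusion $W_I \subseteq V_I$, I would induct on the length of the shortest $\tf$-word carrying $T_I$ to $T$, with a simultaneous downward induction on depth in $i$-strings. The base case $T = T_I$ is immediate. In the inductive step, write $T = \tf_i T''$ with $T'' \in B_I$; then $f_i G(T'') \in V_I$, and its expansion contains $G(T)$ with nonzero coefficient, the remaining terms being of the form $G(\tf_i^j T'')$ for $j \geq 2$. Each deeper tableau $\tf_i^j T''$ lies in $B_I$ and is handled by the simultaneous induction (one peels off the deepest strand first), so one can solve for $G(T) \in V_I$.

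For the reverse inclusion $V_I \subseteq W_I$, the task reduces to showing $W_I$ is stable under each $f_i$, since torus stability is automatic from the weight-vector property of each $G(T)$. Given $T \in B_I$, every summand of $f_i G(T)$ is $G(T')$ for some $T'$ in the $i$-string of $T$, and the crux is to verify that each such $T'$ lies in $B_I$. I would settle this by a secondary induction on the Bruhat length of $I$, using the alternative recursive description $B_{s_i I}(d\omega_k) = \{\tf_i^j b : b \in B_I(d\omega_k),\, j \geq 0\}\setminus\{0\}$ whenever $s_i I$ covers $I$ in Bruhat order, and checking that the $f_i$-expansion of $G(T)$ for $T \in B_I$ never escapes the inductively constructed Demazure crystal. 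This closure property of Demazure crystals under $f_i$-action is the main obstacle and the technical heart of \cite{Kasdem}; once it is in hand, it combines with the first inclusion to yield $V_I(d\omega_k) = W_I$ and the theorem.
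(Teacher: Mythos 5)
The paper does not give a proof of Theorem~\ref{thm:Kas}; it is quoted directly as a theorem of Kashiwara \cite{Kasdem}, so there is no in-paper argument to compare against. Judging your sketch on its own, the ``central tool'' you invoke is not correct. You claim that in the canonical-basis expansion $f_i\, G(T) = \sum_{T'} c^i_{T,T'}\, G(T')$, every $T'$ with nonzero coefficient lies in the $i$-string of $T$. But $f_i$ lowers weight by exactly $\alpha_i$, and the only element of the $i$-string of $T$ of weight $\wt(T) - \alpha_i$ is $\tf_i T$; so your claim would force $f_i G(T)$ to be a scalar multiple of $G(\tf_i T)$, i.e.\ the canonical basis would be a monomial basis for the Chevalley generators. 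That is false already in small examples. The true string property (the lemma Kashiwara proves and uses) reads, at $q=1$,
\[
f_i\, G(T) \;=\; (\varphi_i(T)+1)\, G(\tf_i T) \;+\; \sum_{T'} c_{T,T'}\, G(T'),
\]
where the remainder runs over $T'$ with $\varepsilon_i(T') > \varepsilon_i(\tf_i T)$; these remainder tableaux are generally \emph{not} in the $i$-string of $T$. This breaks your induction for $W_I\subseteq V_I$: the remainder terms are not $G(\tf_i^j T'')$ with $j\geq 2$, and there is no a priori reason for them to lie in $B_I(d\omega_k)$. Whether they do is exactly the Demazure-crystal closure property that you rightly flag as ``the technical heart'' of \cite{Kasdem} when treating the reverse inclusion --- so in fact both inclusions hinge on that closure, and your sketch neither establishes it nor can proceed without it. To make this line of argument rigorous one needs Kashiwara's triangularity with respect to the $\varepsilon_i$-filtration, proved by induction on Bruhat order simultaneously with the closure of Demazure crystals; that joint induction is precisely the content of \cite{Kasdem}, and it cannot be replaced by the stronger-but-false $i$-string claim.
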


By Theorem \ref{thm:Kas}, we obtain:
\begin{proposition}\label{prop:SchubDemazure} \
We have 
\begin{enumerate}
\item
$\I(X_I)_d = V_{I}(d\omega_k)^\perp \subset V(d\omega_k)^* = R(k,n)_d$ has a basis given by $\{\H(T)\mid T \notin B_I(d\omega_k)\}$.
\item
$R(X_I)_d$ has a basis given by (the image of) $\{\H(T) \mid T \in B_I(d\omega_k)\}$.
\end{enumerate}
\end{proposition}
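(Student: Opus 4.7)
The plan is to derive this proposition as a direct and essentially formal consequence of Kashiwara's Theorem \ref{thm:Kas} together with the duality between the canonical basis $\{G(T)\}$ of $V(d\omega_k)$ and the dual canonical basis $\{\H(T)\}$ of $V(d\omega_k)^*$. No genuinely new work is required: the content is packaged into Theorem \ref{thm:Kas} and into the identification $\I(X_I)_d = V_I(d\omega_k)^\perp$ recalled from \cite[Chapter 8]{Kum}.

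For part (1), the strategy is as follows. The bases $\{G(T) \mid T \in B(d\omega_k)\}$ and $\{\H(T) \mid T \in B(d\omega_k)\}$ are dual to each other by construction, so under the identification $R(k,n)_d = V(d\omega_k)^*$ the pairing satisfies $\langle \H(T), G(T')\rangle = \delta_{T,T'}$. Theorem \ref{thm:Kas} asserts that $V_I(d\omega_k)$ is spanned by the canonical basis vectors $\{G(T) \mid T \in B_I(d\omega_k)\}$. The annihilator of a subspace spanned by a subset of one basis has, as basis, the dual basis elements indexed by the complementary subset; applied here this yields that $V_I(d\omega_k)^\perp$ has basis $\{\H(T) \mid T \notin B_I(d\omega_k)\}$. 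Invoking $\I(X_I)_d = V_I(d\omega_k)^\perp$ gives the first claim.

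For part (2), I would use the short exact sequence
\[
0 \longrightarrow \I(X_I)_d \longrightarrow R(k,n)_d \longrightarrow R(X_I)_d \longrightarrow 0,
\]
which is exact on the right by the surjectivity of $\Gamma(\Gr(k,n),\O(d)) \to \Gamma(X_I,\O(d))$ recalled in the text. Since $\{\H(T) \mid T \in B(d\omega_k)\}$ is a basis of $R(k,n)_d$ partitioned by part (1) into the basis $\{\H(T) \mid T \notin B_I(d\omega_k)\}$ of $\I(X_I)_d$ and the remaining vectors $\{\H(T) \mid T \in B_I(d\omega_k)\}$, the images of the latter descend to a basis of the quotient $R(X_I)_d$.

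There is essentially no obstacle to overcome; the only point requiring care is the bookkeeping of which indexing set corresponds to the submodule and which to its annihilator. The nontrivial inputs (Kashiwara's basis theorem for Demazure modules, and the Borel--Weil type identification of $\I(X_I)_d$ with $V_I(d\omega_k)^\perp$) are quoted from the literature, so the proof reduces to an elementary duality argument.
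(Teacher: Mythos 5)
Your argument is exactly the one the paper intends: Theorem~\ref{thm:Kas} supplies the canonical basis of $V_I(d\omega_k)$, the annihilator of a span of basis vectors has the complementary dual basis vectors as a basis, and the classical identification $\I(X_I)_d = V_I(d\omega_k)^\perp$ from \cite{Kum} plus the surjectivity of restriction gives both parts. This matches the paper's (very brief) derivation, so there is nothing to add.
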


Let us give a more explicit description of $B_I(d\omega_k)$.

\begin{proposition}
The set $B_I(d\omega_k)$ consists of tableaux $T$ which are entry-wise greater than or equal to $T_I$.
\end{proposition}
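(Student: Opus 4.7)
The plan is to prove the two inclusions separately. For $B_I(d\omega_k) \subseteq \{T \in B(d\omega_k) : T \geq T_I \text{ entry-wise}\}$, I will induct on the minimum number of crystal operators $\tf_i$ needed to reach $T$ from $T_I$. The base case $T = T_I$ is trivial. For the inductive step, the operator $\tf_i$ either annihilates a tableau or replaces a single occurrence of $i$ by $i+1$; in the latter case $\tf_i T \geq T \geq T_I$ entry-wise, using the inductive hypothesis.

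For the reverse inclusion, the plan is to compare cardinalities. From Proposition \ref{prop:SchubDemazure}(2), $|B_I(d\omega_k)| = \dim R(X_I)_d$. Reading off columns identifies semistandard tableaux $T$ of shape $d\omega_k$ with weakly increasing chains $J_1 \leq J_2 \leq \cdots \leq J_d$ in Bruhat order on $\tbinom{[n]}{k}$, and the condition $T \geq T_I$ entry-wise is equivalent to $I \leq J_1$. The classical standard monomial theorem for the Grassmannian Schubert variety $X_I$ (cf.\ \cite{LaLi}) asserts that the products $\Delta_{J_1} \cdots \Delta_{J_d}$ ranging over all chains $I \leq J_1 \leq \cdots \leq J_d$ form a basis of $R(X_I)_d$. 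Hence $\dim R(X_I)_d = \#\{T \in B(d\omega_k) : T \geq T_I\}$, and combined with the easy inclusion this will force equality.

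The only non-elementary ingredient is the standard monomial theorem, the same input used in the introduction to describe $\Gamma(\Pi_f, \O(d))$. A self-contained combinatorial alternative would proceed by induction on $s(T) := \sum_{r,c}(T(r,c) - T_I(r,c))$: given $T \neq T_I$ with $T \geq T_I$, one would produce an index $i$ with $\te_i T \neq 0$ and $\te_i T \geq T_I$, then apply the inductive hypothesis to $\te_i T$ to recover $T = \tf_i \te_i T \in B_I(d\omega_k)$. A natural candidate is $i = j - 1$, where $c^*$ is the smallest column index with $J_{c^*} \neq I$ and $j$ is the smallest element of $J_{c^*}$ strictly exceeding the corresponding element of $I$. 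The principal obstacle in that approach would be a case analysis with the tensor product signature rule on $B(\omega_k)^{\otimes d}$ to verify that $\te_{j-1}$ acts on a column $c \geq c^*$, rather than on an earlier column where $J_c = I$ might contain $j$ in a row $r$ with $i_r = j$, in which case the change would violate $\te_{j-1}T \geq T_I$.
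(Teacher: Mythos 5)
Your proof is correct and follows essentially the same route as the paper: the easy inclusion $B_I(d\omega_k) \subseteq \{T \geq T_I\}$ because $\tf_i$ can only raise entries, followed by the cardinality match with $\dim R(X_I)_d$ via Proposition~\ref{prop:SchubDemazure}(2) on one side and the standard monomial basis of \cite{LaLi} on the other. The self-contained combinatorial alternative you sketch (finding $i$ with $\te_i T \neq 0$ and $\te_i T \geq T_I$) is a reasonable idea but, as you note, would require a careful signature-rule case analysis that you have not carried out, so the cardinality argument remains the proof.
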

\begin{proof}
Let $S$ denote the set of tableaux $T \in B(d\omega_k)$ that are entry-wise greater than or equal to $T_I$.
Since the operators $\tf_i$ decreases a single entry of a tableau, it is clear that $B_I(d\omega_k)$ is contained in $S$.  Also, it is known that the set $S$ indexes a basis for $R(X_I)_d$ known as the standard monomial basis, see for example \cite{LaLi}.  Thus $|B_I(d\omega_k)| = |S|$, so $B_I(d\omega_k) = S$.
\end{proof}

\subsection{Cyclic Demazure modules}
Let $f$ be a $(k,n)$-bounded affine permutation.  Define $\I(\Pi_f)_d \subset R(k,n)_d$ by
$$
\I(\Pi_f)_d\coloneqq \I(\Pi_f) \cap R(k,n)_d
$$
to be the degree $d$ homogeneous component of $\I(\Pi_f)$.  Since $\I(\Pi_f)$ is a homogeneous ideal, it is spanned by the subspaces $\I(\Pi_f)_d$.  The aim of this section is to give a representation-theoretic description of $\I(\Pi_f)_d$ as a subspace of $R(k,n)_d \simeq V(d\omega_k)^*$.

Let $f \in \Bound(k,n)$ have $(k,n)$-Grassmann-necklace $\I(f) = (I_1,I_2,\ldots,I_n)$.  Define the \defn{cyclic Demazure crystal} $B_f(d\omega_k)$ to be intersection 
$$
B_f(d\omega_k)\coloneqq B_{I_1}(d\omega_k) \cap \chi(B_{\chi^{-1}(I_2)}(d\omega_k)) \cap \cdots \cap \chi^{n-1}(B_{\chi^{1-n}(I_n)}(d\omega_k)).
$$
If we identify $B(\omega_k)$ with the set $\tbinom{[n]}{k}$ of $k$-element subsets of $[n]$, then $B_f(\omega_k)$ is simply the positroid $\M(f)$ \eqref{eq:positroid}.  Also, define the \defn{cyclic Demazure module} $V_f(d\omega_k)$ to be intersection
\begin{equation}\label{eq:CDMdef}
V_f(d\omega_k) \coloneqq V_{I_1}(d\omega_k) \cap \chi(V_{\chi^{-1}(I_2)}(d\omega_k)) \cap \cdots \cap \chi^{n-1}(V_{\chi^{1-n}(I_n)}(d\omega_k)).
\end{equation}

Let $R(\Pi_f)$ denote the homogeneous coordinate ring of the positroid variety $\Pi_f$.  

\begin{theorem}\label{thm:canspan}
The subspace $V_f(d\omega_k)$ has a basis $\{G(T) \mid T \in B_f(d\omega_k)\}$.
\end{theorem}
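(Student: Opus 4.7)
The plan is to combine Kashiwara's basis theorem (Theorem \ref{thm:Kas}) for ordinary Demazure modules with the cyclic invariance of the canonical basis (Theorem \ref{thm:cyclic}), and then invoke a trivial linear-algebra fact about intersections of subspaces each of which is spanned by a subset of a common basis.

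First I would record the following general observation: if $V$ is a vector space with a distinguished basis $\mathcal{B}$, and $W_1,\ldots,W_n \subseteq V$ are subspaces such that each $W_a$ has a basis $\mathcal{B}_a \subseteq \mathcal{B}$, then $\bigcap_a W_a$ has basis $\bigcap_a \mathcal{B}_a$. The proof is immediate by expressing any vector $v \in \bigcap_a W_a$ in the basis $\mathcal{B}$ and noting that, since $v \in W_a$, every basis element appearing with nonzero coefficient in $v$ must lie in $\mathcal{B}_a$. I would apply this with $V = V(d\omega_k)$ and $\mathcal{B} = \{G(T) \mid T \in B(d\omega_k)\}$.

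Second, I need to check that each subspace appearing in the intersection \eqref{eq:CDMdef} is spanned by a subset of the canonical basis $\mathcal{B}$. For the untwisted factor $V_{I_1}(d\omega_k)$ this is exactly Theorem \ref{thm:Kas}, which gives basis $\{G(T) \mid T \in B_{I_1}(d\omega_k)\}$. For the twisted factor $\chi^{a-1}(V_{\chi^{1-a}(I_a)}(d\omega_k))$, I apply $\chi^{a-1}$ to the Kashiwara basis and use Theorem \ref{thm:cyclic}, which gives
\begin{equation*}
\chi^{a-1}(G(T)) = G(\chi^{a-1}(T)).
\end{equation*}
Hence $\chi^{a-1}(V_{\chi^{1-a}(I_a)}(d\omega_k))$ has basis
\begin{equation*}
\bigl\{G(T') \mid T' \in \chi^{a-1}(B_{\chi^{1-a}(I_a)}(d\omega_k))\bigr\} \subseteq \mathcal{B}.
\end{equation*}

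Third, applying the general observation to the intersection \eqref{eq:CDMdef} yields that $V_f(d\omega_k)$ has basis
\begin{equation*}
\bigcap_{a=1}^{n} \chi^{a-1}\bigl(B_{\chi^{1-a}(I_a)}(d\omega_k)\bigr) \; = \; B_f(d\omega_k),
\end{equation*}
which is precisely the desired statement. There is essentially no obstacle here: the content of the theorem is entirely carried by Theorem \ref{thm:cyclic} (which ensures that every cyclically rotated Demazure module is spanned by an honest subset of the canonical basis of $V(d\omega_k)$, not merely by some linear combinations of them). Once that compatibility is in hand, the intersection statement is formal. The only subtlety worth flagging is that the argument crucially requires the signed rotation $\chi$ of Theorem \ref{thm:cyclic} rather than the unsigned $\tilde{\chi}$; with the unsigned version one would only get $\tilde{\chi}(G(T)) = \pm G(\chi(T))$, and although this is enough to preserve the span of any subset of $\mathcal{B}$, using the signed version keeps the indexing entirely bookkeeping-free.
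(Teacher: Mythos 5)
Your proof is correct and follows essentially the same route as the paper: apply Kashiwara's theorem to each Demazure module, transport the resulting canonical-basis subsets through the rotation via Theorem \ref{thm:cyclic}, and conclude by the elementary fact that intersecting subspaces spanned by subsets of a common basis is spanned by the intersection of those subsets. If anything, your citation of Theorem \ref{thm:cyclic} (which governs $G(T)$) is slightly more on-the-nose than the paper's reference to Theorem \ref{thm:canbasis}(4) (which is phrased for the dual basis $\H(T)$), but the content is the same.
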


\begin{theorem}\label{thm:Posideal}\
\begin{enumerate}
\item
$\I(\Pi_f)_d$ is isomorphic to $V_f(d\omega_k)^\perp$ and has a basis given by $\{\H(T) \mid T \notin B_f(d\omega_k)\}$.
\item
$R(\Pi_f)_d$ has a basis given by the images of $\{\H(T) \mid T \in B_f(d\omega_k)\}$.
\end{enumerate}
\end{theorem}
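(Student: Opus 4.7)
The plan is to compute $V_f(d\omega_k)^\perp$ directly in terms of the dual canonical basis, and then identify this subspace with $\I(\Pi_f)_d$ by invoking the ideal-theoretic description of $\Pi_f$ from \cite{KLS}.

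First, I would take orthogonal complements in \eqref{eq:CDMdef} and apply the standard identity $\bigl(\bigcap_i W_i\bigr)^\perp = \sum_i W_i^\perp$ to obtain
$$V_f(d\omega_k)^\perp = \sum_{a=1}^n \bigl(\chi^{a-1} V_{\chi^{1-a}(I_a)}(d\omega_k)\bigr)^\perp.$$
Under the canonical identification $V(d\omega_k)^* \cong R(k,n)_d$, each summand coincides with the ideal $\I(\chi^{a-1}(X_{\chi^{1-a}(I_a)}))_d$: this combines Proposition \ref{prop:SchubDemazure}(1) (the Schubert case) with the compatibility of cyclic rotation on $\Gr(k,n)$ and on $V(d\omega_k)$. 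By Proposition \ref{prop:SchubDemazure}(1), each Schubert ideal $\I(X_J)_d$ is spanned by $\{\H(T) \mid T \notin B_J(d\omega_k)\}$; applying $\chi^{a-1}$ and using Theorem \ref{thm:canbasis}(4), which asserts that cyclic rotation permutes the dual canonical basis via $\H(T) \mapsto \H(\chi(T))$, the $a$-th summand acquires the basis $\{\H(T) \mid T \notin \chi^{a-1}(B_{\chi^{1-a}(I_a)}(d\omega_k))\}$. Because each summand is a coordinate subspace with respect to the dual canonical basis, so is their sum, and the indexing matches the definition of $B_f(d\omega_k)$, yielding
$$V_f(d\omega_k)^\perp = \spn\{\H(T) \mid T \notin B_f(d\omega_k)\}.$$

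Next, I would match this subspace with $\I(\Pi_f)_d$. The containment $V_f(d\omega_k)^\perp \subseteq \I(\Pi_f)_d$ is automatic from the set-theoretic definition \eqref{eq:posdef}: each summand vanishes on one of the cyclic Schubert varieties whose intersection is $\Pi_f$. The reverse containment is the crucial step; here I would invoke the ideal-theoretic statement from \cite{KLS} that
$$\I(\Pi_f) = \sum_{a=1}^n \I\bigl(\chi^{a-1}(X_{\chi^{1-a}(I_a)})\bigr),$$
proved there via compatible Frobenius splittings. Restricting to degree $d$ yields $\I(\Pi_f)_d = V_f(d\omega_k)^\perp$, which proves (1). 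Part (2) is then immediate: the images of $\{\H(T) \mid T \in B_f(d\omega_k)\}$ descend to a basis of the quotient $R(\Pi_f)_d = R(k,n)_d / \I(\Pi_f)_d$.

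The main obstacle is the reverse containment, which rests on the nontrivial ideal-theoretic intersection property from \cite{KLS}; without it, one only obtains $V_f(d\omega_k)^\perp \subseteq \I(\Pi_f)_d$ tautologically. Everything else is bookkeeping, made possible by the cyclic invariance of the dual canonical basis (Theorem \ref{thm:canbasis}(4)) together with the parallel cyclic definitions of $V_f(d\omega_k)$, $B_f(d\omega_k)$, and $\Pi_f$.
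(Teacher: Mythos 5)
Your proposal is correct and takes essentially the same route as the paper: the crucial ingredient is the scheme-theoretic statement from \cite{KLS,KLS2} (Proposition \ref{prop:posideal}) that $\I(\Pi_f)$ is the sum of the cyclically rotated Schubert ideals, after which the identification $\I(\Pi_f)_d = V_f(d\omega_k)^\perp$ follows from Proposition \ref{prop:SchubDemazure}, the identity $(\bigcap_i W_i)^\perp = \sum_i W_i^\perp$, and the cyclic invariance of the dual canonical basis (Theorem \ref{thm:canbasis}(4)). The only cosmetic difference is that you rederive the basis of $V_f(d\omega_k)^\perp$ from the coordinate-subspace structure rather than citing Theorem \ref{thm:canspan} directly, but the argument is identical in substance.
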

Theorem \ref{thm:Posideal} reduces in the case $d = 1$ to Theorem \ref{thm:Oh}.
\begin{cor}
For $f \in\Bound(k,n)$ and $d \geq 1$, the cyclic Demazure crystal $B_f(d\omega_k)$ is nonempty.
\end{cor}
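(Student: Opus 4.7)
The plan is to deduce the corollary directly from Theorem \ref{thm:Posideal}(2), which identifies $R(\Pi_f)_d$ with the span of (the images of) $\{\H(T) \mid T \in B_f(d\omega_k)\}$. Under this identification, nonemptiness of $B_f(d\omega_k)$ is equivalent to the statement $R(\Pi_f)_d \neq 0$, so the task reduces to exhibiting a single nonzero element of $R(\Pi_f)_d$.

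For this, I would invoke the nonemptiness of the totally positive part $\Pi_{f,>0}$, which is a standard fact coming from Postnikov's cellular decomposition of $\Gr(k,n)_{\geq 0}$: every $(k,n)$-bounded affine permutation $f$ indexes a nonempty positroid cell. Picking any $X \in \Pi_{f,>0}$, Theorem \ref{thm:Oh} gives $\M_X = \M(f)$. Since $I_1 \in \M(f)$ (the first element of the Grassmann necklace is always a basis of the positroid, directly from the definition \eqref{eq:positroid} and the necklace relations), we have $\Delta_{I_1}(X) > 0$, hence $\Delta_{I_1}^d(X) > 0$. Therefore the image of $\Delta_{I_1}^d \in R(k,n)_d$ in $R(\Pi_f)_d$ is nonzero, which gives $R(\Pi_f)_d \neq 0$, and Theorem \ref{thm:Posideal}(2) then yields $B_f(d\omega_k) \neq \emptyset$.

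The only real concern is circularity: the proofs of Theorems \ref{thm:canspan} and \ref{thm:Posideal} should not themselves invoke this corollary. Since those theorems follow from canonical-basis and duality arguments that do not appeal to the nonemptiness of $B_f(d\omega_k)$, the logic is clean, and the corollary is essentially a one-line consequence of Theorem \ref{thm:Posideal}(2) together with the nonvanishing of a single Pl\"ucker coordinate on $\Pi_{f,>0}$.
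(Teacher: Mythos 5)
Your proof is correct and follows essentially the same idea as the paper's: both arguments boil down to observing that a Grassmann necklace Pl\"ucker coordinate $\Delta_{I_a}$ does not vanish on $\Pi_f$, raising it to the $d$-th power, and appealing to Theorem~\ref{thm:Posideal} to conclude $B_f(d\omega_k)\neq\emptyset$. The paper's version is a bit more direct---it simply asserts $\Delta_{I_a}\not\equiv 0$ on $\hPi_f$ without routing through Postnikov's cell decomposition and Oh's theorem, and it identifies the specific tableaux $T_{I_1},\ldots,T_{I_n}$ as elements of $B_f(d\omega_k)$ rather than only concluding nonemptiness---but your logic is sound and non-circular.
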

Indeed, for any $a$, the function $\Delta_{I_a}$ is non-zero on $\hPi_f$, and so is $\Delta_{I_a}^d$.  Thus, $B_f(d\omega_k)$ contains the tableaux $T_{I_1}, T_{I_2},\ldots,T_{I_n}$.

\begin{remark}
It follows from Theorem \ref{thm:Posideal} that the vectors $\H(T) \in V(d\omega_k)^*$ that do not restrict to identically zero on $\Pi_f$ form a basis for $R(\Pi_f)_d$.  This is not the case for the standard monomial basis (cf. \cite{LaLi}).
\end{remark}

\begin{example}
Suppose $k = 1$.  In this case $B_I(d\omega_1)$ is the set of one-row tableaux (of length $d$) with entries in $1,2,\ldots,i$, where $I = \{i\}$.  By choosing the $(1,n)$-Grassmann necklace appropriately, $B_f(\omega_1)$ can be arranged to be any subset of $\{1,2,\ldots,n\}$.  For example, if $n = 4$, $(I_1,I_2,I_3,I_4) = (1,3,3,1)$ gives $B_f(\omega_1) = \{1,3\}$.  The set $B_f(d\omega_1)$ is simply the set of one-row tableaux with entries in $B_f(\omega_1)$.
\end{example}

\begin{example}\label{ex:cyclicDem}
Take $k = 2$ and $n = 4$.  Let us consider the positroid variety $\Pi_f$ where $f = [2547] \in \B(2,4)$.  The Grassmann necklace is $\I(f) = (13,23,13,41)$.  The set $B_f(2\omega_2)$ is given by the set of tableaux
$$
\tableau[sbY]{
1&1\\
3&3
}
\;\;
\tableau[sbY]{
1&2\\
3&3
}
\;\;
\tableau[sbY]{
1&2\\
3&4
}
\;\;
\tableau[sbY]{
1&1\\
3&4
}
\;\;
\tableau[sbY]{
2&2\\
3&3
}
\;\;
\tableau[sbY]{
2&2\\
3&4
}
\;\;
\tableau[sbY]{
1&1\\
4&4
}
\;\;
\tableau[sbY]{
1&2\\
4&4
}
\;\;
\tableau[sbY]{
2&2\\
4&4
}
$$
\end{example}

\begin{example}\label{ex:2}
Consider $k = 2$ and $n = 5$.  Let us consider the positroid variety $\Pi_f$ where $f = [63547] \in \B(2,5)$ and compute $B_f(2\omega_2)$.  The Grassmann necklace is $\I(f) = (12,12,13,15,15)$.  Since $B_{12}(2\omega_2) = B(2\omega_2)$, we have
$$
B_f(2\omega_2) = \chi(B_{15}(2\omega_2)) \cap  \chi^2(B_{14}(2\omega_2)) \cap \chi^3(B_{23}(2\omega_2)).
$$
The set $B_{15}(2\omega_2)$ consists of all tableaux of the form $\tableau[sbY]{
a&b\\
5&5
}$ with $1 \leq a \leq b \leq 4$ and thus $\chi(B_{15}(2\omega_2))$ consists of all tableaux of the form $\tableau[sbY]{
1&1\\
a&b
}$ with $2 \leq a \leq b \leq 5$.  In particular, every tableau in $B_f(2\omega_2)$ has exactly two 1-s.  Intersecting with $\chi^2(B_{14}(2\omega_2))$ imposes no additional restriction.  On the other hand, looking at tableaux in $B_{23}(2\omega_2)$ with two 3-s, we get the six tableaux
$$
\tableau[sbY]{
2&2\\
3&3
}
\;\;
\tableau[sbY]{
2&3\\
3&4
}
\;\;
\tableau[sbY]{
2&3\\
3&5
}
\;\;
\tableau[sbY]{
3&3\\
4&4
}\;\;
\tableau[sbY]{
3&3\\
4&5
}
\;\;
\tableau[sbY]{
3&3\\
5&5
}
$$
and thus $B_f(2\omega_2)$ consists of the tableaux
$$
\tableau[sbY]{
1&1\\
5&5
}
\;\;
\tableau[sbY]{
1&1\\
2&5
}
\;\;
\tableau[sbY]{
1&1\\
3&5
}
\;\;
\tableau[sbY]{
1&1\\
2&2
}\;\;
\tableau[sbY]{
1&1\\
2&3
}
\;\;
\tableau[sbY]{
1&1\\
3&3
}
$$
\end{example}

We give an example of a Schubert variety whose ideal does not have a basis given by a subset of the dual canonical basis.\begin{example}\label{ex:13}
Let $X \subset \Gr(2,4)$ be given by the single equation $\{\Delta_{13} = 0\}$.  This is a permutation of a standard Schubert variety that is not a positroid variety.  Then the degree two part of $\I(X)$ has a one-dimensional weight space for the weight $(1,1,1,1)$.  It is spanned by the vector $\Delta_{13}\Delta_{24}$.  This vector is a sum of two elements of the dual canonical basis by Theorem \ref{thm:LamTL}.
\end{example}

%
%

\subsection{Proof of Theorem \ref{thm:canspan}} 
By Theorem \ref{thm:Kas}, $V_{\chi^{1-a}(I_a)}(d\omega_k)$ has basis $\{G(T) \mid T \in B_{\chi^{1-a}(I_a)}(d\omega_k)\}$.  Thus by Theorem \ref{thm:canbasis}(4), the rotation $\chi^{a-1}(V_{\chi^{1-a}(I_a)}(d\omega_k))$ has basis given by 
$$
\{G(T) \mid T \in \chi^{a-1}B_{\chi^{1-a}(I_a)}(d\omega_k)\}.
$$
It follows that the intersection \eqref{eq:CDMdef} has basis $\{G(T) \mid T \in B_f(d\omega_k)\}$, establishing Theorem \ref{thm:canspan}.

\subsection{Proof of Theorem \ref{thm:Posideal}}
Our proof of Theorem \ref{thm:Posideal} relies on the following result proved jointly with Knutson and Speyer~\cite{KLS,KLS2}.  It states that the intersection in \eqref{eq:posdef} is reduced, so the equality in \eqref{eq:posdef} holds as schemes.
\begin{proposition}\label{prop:posideal}
The homogeneous ideal $\I_f$ of a positroid variety is given by 
$$
\I(\Pi_f) = \I(X_{I_1}) + \chi( \I(X_{\chi^{-1}(I_2)}))+\cdots + \chi^{n-1}( \I(X_{\chi^{1-n}(I_n)})).
$$
\end{proposition}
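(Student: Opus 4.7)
The plan is to prove Proposition~\ref{prop:posideal} via Frobenius splitting, following the strategy of Knutson-Lam-Speyer~\cite{KLS,KLS2}. Denote by $J := \I(X_{I_1}) + \sum_{a=2}^n \chi^{a-1}(\I(X_{\chi^{1-a}(I_a)}))$ the ideal on the right hand side. The containment $J \subseteq \I(\Pi_f)$ is immediate from~\eqref{eq:posdef}, since each summand is the ideal of a subvariety containing $\Pi_f$. The content of the proposition is the reverse containment, or equivalently that the affine scheme $\Spec(R(k,n)/J)$ is already reduced.

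First I would reduce to positive characteristic: everything in sight is defined over $\Z$, so reducedness over $\C$ follows from reducedness of the base change to $\overline{\mathbb{F}_p}$ for $p$ in a Zariski dense set of primes. Then I would invoke the Mehta-Ramanathan/Brion-Kumar principle: if closed subschemes $Y_1, \ldots, Y_n$ of an ambient $X$ are compatibly split by a single Frobenius splitting of $X$, then the scheme-theoretic intersection $Y_1 \cap \cdots \cap Y_n$ is automatically reduced. The task thus becomes: produce one Frobenius splitting of $\hGr(k,n)$ that simultaneously compatibly splits all of the cyclically rotated Schubert varieties $\chi^{a-1}(X_{\chi^{1-a}(I_a)})$.

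The standard Frobenius splitting of $\hGr(k,n)$, obtained from a section of $\omega_{\hGr}^{-(p-1)}$ built as a suitable product of Pl\"ucker coordinates, already compatibly splits every ordinary Schubert variety $X_I$; the remaining step is to upgrade this to a $\chi$-equivariant splitting, so that the cyclically rotated Schubert varieties are compatibly split as well. The cleanest route is to realize $\Gr(k,n)$ as a partial flag quotient of the affine flag variety $\widetilde{\mathrm{Fl}}_n$ of $SL_n$, under which $\chi$ lifts to an element of the extended affine Weyl group, and to pull back the Mathieu splitting of $\widetilde{\mathrm{Fl}}_n$, which is manifestly invariant under the entire extended affine Weyl group. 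The hard part will be verifying that this pulled-back splitting descends correctly to $\hGr(k,n)$ and compatibly splits each rotated Schubert subvariety simultaneously; once this is established, the compatibly-split intersection theorem yields $\I(\Pi_f) = J$ in positive characteristic, and standard flatness arguments over $\Z$ lift the identity back to characteristic zero.
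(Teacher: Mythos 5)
The paper does not actually prove this proposition: it explicitly invokes it as a result ``proved jointly with Knutson and Speyer [KLS, KLS2],'' and the displayed statement is simply the assertion that the intersection in \eqref{eq:posdef} is scheme-theoretically reduced. So what you should be comparing your sketch against is the argument in those references.

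Your high-level framework is sound and matches the spirit of the KLS proof: the containment $J\subseteq\I(\Pi_f)$ is indeed immediate, the genuine content is reducedness of $\Spec(R(k,n)/J)$, the spreading-out to positive characteristic is standard, and the Mehta--Ramanathan principle that a scheme-theoretic intersection of subvarieties compatibly split by a single splitting is reduced is exactly the right tool. So far so good.

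The step that does not work as written is the construction of the compatible splitting. You propose to realize $\Gr(k,n)$ ``as a partial flag quotient of the affine flag variety $\widetilde{\mathrm{Fl}}_n$'' and descend the Mathieu splitting. But $\Gr(k,n)$ is not a parahoric quotient of $\widetilde{\mathrm{Fl}}_n$: the parahoric flag varieties of $\widehat{\sl}_n$ (the affine Grassmannian, the partial affine flag varieties, etc.) are all infinite-dimensional, and $\widetilde{\mathrm{Fl}}_n$ fibers over the affine Grassmannian with fiber $\mathrm{Fl}_n$ rather than mapping onto finite Grassmannians. There is no natural morphism $\widetilde{\mathrm{Fl}}_n\to\Gr(k,n)$ along which to push or pull a splitting, so the descent you want to perform does not make sense. (The connection between $\Bound(k,n)$ and the affine symmetric group, which is real and used throughout this subject, is combinatorial/stratification-level, not a quotient map of ind-schemes.) A second worry, even granting some geometric link, is that $\chi$-invariance of a splitting by itself would not automatically compatibly split the specific Schubert varieties $\chi^{a-1}(X_{\chi^{1-a}(I_a)})$: these involve the varying subsets $I_a$ of the Grassmann necklace, not the orbit of a single $X_I$ under $\chi$; one needs a splitting compatibly splitting \emph{all} Schubert varieties for \emph{all} cyclic flags simultaneously.

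What KLS actually do stays entirely inside the finite flag variety. Each cyclically rotated Schubert variety $\chi^{a-1}(X_{\chi^{1-a}(I_a)})\subset\Gr(k,n)$ is itself a positroid variety, hence the image of a Richardson variety $X_w\cap X^u$ under the projection $\pi\colon \mathrm{Fl}_n\to\Gr(k,n)$. Richardson varieties are all compatibly split by the single Mehta--Ramanathan splitting of $\mathrm{Fl}_n$, and by pushing that splitting forward along $\pi$ (using $\pi_*\O_{\mathrm{Fl}_n}=\O_{\Gr(k,n)}$) one obtains a single splitting of $\Gr(k,n)$ that compatibly splits every projected Richardson variety, in particular all $n$ of the rotated Schubert varieties in \eqref{eq:posdef}. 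The compatibly-split-intersection theorem then gives reducedness, and the characteristic-zero statement follows as you say. If you want a genuinely $\chi$-equivariant splitting of $\hGr(k,n)$ directly, a natural candidate is $(\prod_{a=1}^{n}\Delta_{[a,a+k-1]})^{p-1}$, the $(p-1)$-st power of the product of the ``frozen'' cyclic Pl\"ucker coordinates, which is a section of $\omega_{\Gr(k,n)}^{1-p}=\O(n(p-1))$ and is manifestly $\chi$-invariant; but one must still verify it is a splitting and that it compatibly splits the right subvarieties, which is more work than simply quoting the projected-Richardson result.
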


Since $(A \cap B)^\perp = A^\perp + B^\perp$, combining with Proposition \ref{prop:SchubDemazure}, we deduce that
$$
\I(\Pi_f)_d =\left( V_{I_1}(d\omega_k) \cap \chi(V_{\chi^{-1}(I_2)}(d\omega_k)) \cap \cdots \cap \chi^{n-1}(V_{\chi^{1-n}(I_n)}(d\omega_k)) \right)^\perp = V_f(d\omega_k)^\perp.
$$
Combining with Theorem \ref{thm:Kas}, we obtain Theorem \ref{thm:Posideal}(1).  Theorem \ref{thm:Posideal}(2) follows from the isomorphism of vector spaces $R(\Pi_f)_d \cong R(k,n)_d/\I(\Pi_f)_d$.


\subsection{Positivity}

\begin{theorem}\label{thm:poscan} \
For $f \in \Bound(k,n)$ and $T \in B(d\omega_k)$, if $\H(T)$ is not identically zero on $\Pi_f$, then it takes strictly positive values everywhere on $\Pi_{f,>0}$.
\end{theorem}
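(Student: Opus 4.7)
The first step is the reduction via Theorem~\ref{thm:Posideal}(1): the hypothesis that $\H(T)$ is not identically zero on $\Pi_f$ is equivalent to the combinatorial condition $T \in B_f(d\omega_k)$, which I assume throughout. By Theorem~\ref{thm:canbasis}(3), $\H(T)$ is already nonnegative on the affine lift $\hPi_{f,\geq 0}$, so the task reduces to excluding any zero of $\H(T)$ inside the open cell $\Pi_{f,>0}$.

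My plan is to parametrize $\Pi_{f,>0}$ via Postnikov's theory of reduced plabic graphs for the positroid $\M(f)$. The cell is the image of a parametrizing map $\phi\colon \R_{>0}^N \to \Pi_{f,>0}$, and under $\phi$ each Pl\"ucker coordinate $\Delta_I$ pulls back to a polynomial in the edge weights with nonnegative integer coefficients; Theorem~\ref{thm:Oh} amounts to the statement that $\phi^*\Delta_I$ is not identically zero exactly when $I \in \M(f)$, in which case it is strictly positive throughout $\R_{>0}^N$. The claim I would aim to prove is the higher-degree analog: for every $T \in B_f(d\omega_k)$, the pullback $\phi^*\H(T)$ is a polynomial in the edge weights with nonnegative coefficients, not identically zero. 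Such a subtraction-free expression would immediately yield $\H(T)(X) > 0$ for every $X \in \Pi_{f,>0}$.

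To establish this positive expansion, I would use the cyclic rotation identity $\chi^*(\H(T)) = \H(\chi(T))$ from Theorem~\ref{thm:canbasis}(4) together with Proposition~\ref{prop:posideal} to reduce to a Schubert-variety statement, and within a Schubert variety attempt to express $\H(T)$ for $T \in B_I(d\omega_k)$ by iterating Kashiwara/Demazure-type operators starting from the extremal element $\H(T_I) = \Delta_I^d$, whose plabic expansion is manifestly positive. The main obstacle is controlling the signs under such an iteration, which is a form of positivity of canonical-basis transition coefficients and is not elementary; the cleanest route may be to invoke Lusztig's deeper positivity results \cite{LusTP} directly, or alternatively to use a real-analytic argument exploiting that $\Pi_{f,>0}$ is a totally real submanifold of the irreducible complex variety $\Pi_f$ of maximal real dimension, combined with Zariski density of $\Pi_{f,>0}$ in $\Pi_f$, to propagate non-vanishing from a generic point to the entire cell.
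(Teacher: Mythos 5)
Your overall strategy is the right one---produce a subtraction-free parametrized expression for $\H(T)$ on $\Pi_{f,>0}$ and then observe that a nonzero polynomial with nonnegative coefficients is strictly positive on the positive orthant---and you correctly finger Lusztig's positivity \cite{LusTP} as the key input. But the specific parametrization you choose does not let you finish. With the plabic graph/edge-weight parametrization $\phi\colon\R_{>0}^N \to \Pi_{f,>0}$, only the Pl\"ucker coordinates $\Delta_I$ (i.e.\ the degree-one dual canonical basis) have a known subtraction-free expansion, via Lindstr\"om--Gessel--Viennot; there is no mechanism in that picture for expanding the degree-$d$ elements $\H(T)$, and the attempt to generate such an expansion by iterating Demazure/Kashiwara operators does not preserve the dual canonical basis or produce a sign-controlled recursion---this is exactly the obstacle you flag, and it is not something you can route around within the plabic framework. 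The paper instead uses the Marsh--Rietsch-type parametrization $\Pi_{f,>0} = \{x_{i_1}(t_1)\cdots x_{i_m}(t_m)\cdot e_I\}$ by products of one-parameter Chevalley subgroups (for $i=0$ conjugate $x_1$ by $\chi$ and use Theorem~\ref{thm:canbasis}(4)). This makes the value $\H(T)(X)$ literally a matrix coefficient $\langle \H(T),\, x_{i_1}(t_1)\cdots x_{i_m}(t_m)\, G(T_I)\rangle$ of a group element acting on the canonical basis, and the positivity is then a direct citation of Lusztig's result that the $x_i(t)$ act on $\{G(T)\}$ by matrices whose entries are polynomials in $t$ with nonnegative coefficients. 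That is the missing step.

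Your ``real-analytic'' fallback is unsound as stated: Zariski density of $\Pi_{f,>0}$ in $\Pi_f$ only tells you that $\H(T)$ is not identically zero on $\Pi_{f,>0}$; it does not propagate non-vanishing at a generic point to every point of the cell. (A real polynomial such as $t-1$ is nonzero on $\R_{>0}$ but vanishes at $t=1$.) The strict positivity everywhere on the cell genuinely requires the nonnegative-coefficient expansion, not just density.
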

\begin{proof}
Fix $f \in \Bound(k,n)$.  By \cite[Section 7]{LamCDM}, the totally nonnegative cell $\Pi_{f,>0} \cong \R_{>0}^d$ has a parametrization of the following form:
$$
\R_{>0}^d \ni (t_1,t_2,\ldots,t_d) \mapsto x_{i_1}(t_1) \cdots x_{i_d}(t_d) \cdot e_I \in \Pi_{f,>0}
$$
where $e_I \in \Gr(k,n)_{\geq 0}$ for $I = \{i_1,\ldots,i_k\} \in \tbinom{[n]}{k}$ denotes the point 
$$
e_I = \spn(e_{i_1},e_{i_2},\ldots,e_{i_k}) \in \Gr(k,n)_{\geq 0}
$$ and $x_i(a) = \exp(a e_{i,i+1}) \in \GL(n)$ is the one parameter subgroup associated to the Chevalley generator for $i = 1,2,\ldots,n-1$.  For $i = 0$, we define $x_0(a)$ by conjugating $x_1(a)$ by $\chi$.

Fix a lift of $e_I$ to $\hGr(k,n)$.  Then the value of the dual canonical basis element $H(T)\in V(d\omega_k)^*$ on the point $X = x_{i_1}(t_1) \cdots x_{i_d}(t_d) \cdot e_I \in \hGr(k,n)$ is given by
$$
\langle H(T), x_{i_1}(t_1) \cdots x_{i_d}(t_d) \cdot G(T_I) \rangle
$$
where $\langle \cdot, \cdot \rangle$ denotes the natural pairing between $V(d\omega_k)^*$ and $V(d\omega_k)$, and $G(T_I)$ is the canonical basis element of extremal weight indexed by $T_I$.  

Thus it suffices to show that for any $T \in B(d\omega_k)$, the coefficient of $G(T)$ in $x_{i_1}(t_1) \cdots x_{i_d}(t_d) \cdot G(T_I)$ is equal to a (possibly zero) polynomial in $t_1,t_2,\ldots, t_d$ with nonnegative coefficients.  For $i = 1,2,\ldots,n-1$, it follows from the proof of \cite[Proposition 3.2]{LusTP} that the matrix coefficients of $x_i(a)$ on the canonical basis of $V(d\omega_k)$ are polynomials in $t_1,t_2,\ldots, t_d$ with nonnegative coefficients.  By Theorem \ref{thm:canbasis}(4), the same holds for $i = 0$.  The claim follows.
\end{proof}

%


%

\section{Future directions}\label{sec:future}
\subsection{The character of the cyclic Demazure module}
The character of highest weight representation $V(d\omega_k)$ is given by the celebrated Weyl character formula.  The character of the Demazure module $V_I(d\omega_k)$ is given by the Demazure character formula \cite{Dem,And}.

\begin{problem}
Find a formula for the character of $V_f(d\omega_k)$.  Equivalently, compute the weight generating function of $B_f(d\omega_k)$.
\end{problem}

For the bounded affine permutation $f = [2547]$ of Example \ref{ex:cyclicDem}, we have
$$
{\rm ch}(V_f(2\omega_2)) = x_1^2x_3^2 + x_1^2x_3x_4 + x_1^2 x_4^2+ x_1x_2x_3^2 + x_1x_2x_3x_4+ x_1x_2x_4^2 + x_2^2 x_3^2 + x_2^2x_3x_4 + x_2^2 x_4^2
$$
and for $f = [63547]$ of Example \ref{ex:2}, we have
$$
{\rm ch}(V_f(2\omega_2)) = x_1^2(x_2^2 + x_2x_3 + x_2x_5 + x_3^2 + x_3x_5+x_5^2).
$$

\subsection{Quantization}

Quantum versions of Grassmannians and Schubert varieties have been studied by many authors, see for example \cite{LR}.  In that setting, positroid varieties correspond to certain torus-invariant prime ideals, classified in \cite{MC,Y}.  

\begin{problem}
Find the quantum version of the cyclic Demazure module, and quantum versions of Theorems~\ref{thm:canspan} and \ref{thm:Posideal}.
\end{problem}
Note however that the cyclic symmetry acts on the quantum Grassmannian in a more subtle way than it does on the Grassmannian \cite{LLtwist}.

\subsection{Higher degree matroids}
\begin{definition}
For an integer $d \geq 1$, and $X \in \Gr(k,n)$, define the \defn{degree $d$ canonical basis matroid}
$$
\M_{X,d}:= \{T \in B(d\omega_k) \mid \H(T)(X) \neq 0\}.
$$
\end{definition}

By Theorem \ref{thm:canspan}(1), for $d = 1$, $\M_{X,1}$ is the usual matroid of $X$.  By Theorem \ref{thm:poscan}, the degree $d$ canonical basis matroids $\M_{X,d}$ of a point $X \in \Gr(k,n)_{\geq 0}$ is completely determined by the usual positroid $\M_X$.  Thus for any $d$, there is a natural bijection between $\Bound(k,n)$ and the set of degree $d$ positroids, sending $f \in \Bound(k,n)$ to $B_f(d\omega_k)$.

\begin{problem}
Find axioms for degree $d$ canonical basis matroids.
\end{problem}

\subsection{Projective geometry interpretation of dual canonical basis}
Let $X$ be a $k \times n$ matrix representing a point in $\Gr(k,n)$.  We assume that all columns of $X$ are non-zero and think of $X$ as a collection $p_1,p_2,\ldots,p_n$ of points in $\P^{k-1}$.  The vanishing of $\Delta_{i_1,\ldots,i_k}(X)$ is equivalent to the geometric statement that $p_{i_1},\ldots,p_{i_k}$ do not span the whole of $\P^{k-1}$.

The Temperley-Lieb invariants $\Delta_{(\tau,T)}$ can be interpreted as tensor invariants \cite[Appendix]{FLL}, and thereby we obtain an interpretation of degree two matroids in geometric terms.  For example, let $k = 3$ and $n = 6$.  If $\tau = \{(1,6),(2,5),(3,4)\}$, then $\Delta_{(\tau,\emptyset)} = \Delta_{123}\Delta_{456}$, which vanishes if and only if either $p_1,p_2,p_3$ are colinear or $p_4,p_5,p_6$ are colinear.  If $\tau = \{(1,2),(3,4),(5,6)\}$, then $\Delta_{(\tau,\emptyset)}$ vanishes if the lines $\overline{23}, \overline{45}$, and $\overline{16}$ have a common intersection point.   (If any of these pairs, say $p_2$ and $p_3$, do not span a line, then $\Delta_{(\tau,\emptyset)}$ also vanishes.)

%

\begin{problem}
Give an interpretation of the vanishing of the function $H(T)$, $T \in B(d\omega_k)$ in projective geometry terms.
\end{problem}
See also \cite{FP,BHL,LamCDM} for related work.

\subsection{Two maps on $R(k,n)$}

We have a map 
$$
\wedge: \Lambda^\ell(\C^n) \otimes \Lambda^k(\C^n) \to \Lambda^{k +\ell}(\C^n)
$$
induced by 
$$
(e_{i_1} \wedge e_{i_2} \wedge \cdots \wedge e_{i_\ell}) \otimes (e_{j_1} \wedge e_{j_2} \wedge \cdots \wedge e_{j_k}) \longmapsto e_{i_1} \wedge \cdots \wedge e_{i_\ell} \wedge e_{j_1} \wedge \cdots \wedge e_{j_k}.
$$
This is a map of $\GL(n)$-representations, and up to scalar, it is the unique such map, since the multiplicity of $\Lambda^{k +\ell}(\C^n)$ in $\Lambda^\ell(\C^n) \otimes \Lambda^k(\C^n)$ is equal to one.

Similarly, by \cite[Theorem 3.1]{Ste}, the $\GL(n)$-representation $V(d\omega_\ell) \otimes V(d\omega_k)$ is multiplicity free, and in particular, the irreducible representation $V(d\omega_{k+\ell})$ appears with multiplicity one. We thus have a canonical (up to scalar) surjective map
$$
\kappa^d_{k,\ell}: V(d\omega_\ell) \otimes V(d\omega_k) \rightarrow V(d\omega_{k+\ell})
$$
of $\GL(n)$-representations.

\begin{problem}\label{prob:sum}
Give an explicit combinatorial formula for the expansion of $\kappa^d_{k,\ell}(G(T) \otimes G(T'))$ in the canonical basis.
\end{problem}

We have obtained an explicit combinatorial solution to Problem \ref{prob:sum} for Temperley-Lieb invariants, which we hope to explain elsewhere.

\begin{remark}

Let $\phi_{k,\ell}: \Gr(k,n) \times \Gr(\ell,n) \dashrightarrow \Gr(k+\ell,n)$ be the (rational) direct sum map \cite{LamCDM}, sending $(V,W)$ to $V \bigoplus W$.
It is not difficult to see that the map $\kappa^d_{k,\ell}$ can be chosen to be dual to the map $R(k+\ell,n)_d \to R(\ell,n)_d \otimes R(k,n)_d $ induced by $\phi_{k,\ell}$.  The study of $\phi_{k,\ell}$ was one of the main motivations for the present work, and we refer the reader to \cite{LamCDM} for further details.
\end{remark}

\begin{remark}
An alternating formula for $\kappa^d_{k,\ell}(G(T) \otimes G(T'))$ can be computed in terms of Kazhdan-Lusztig polynomials, for example by work of Brundan \cite{Bru}.
\end{remark}

Similarly, there is (up to scalar) a unique non-trival $\GL(n)$ homomorphism
$$
\eta^{d,d'}_{k}: V((d+d')\omega_{k}) \rightarrow  V(d\omega_\ell) \otimes V(d'\omega_k).
$$
This map is dual to the natural multiplication map $R(k,n)_d \otimes R(k,n)_{d'} \to R(k,n)_{d+d'}$ of the homogeneous coordinate ring.

\begin{problem}
Give an explicit combinatorial formula for the expansion of $\eta^{d,d'}_{k}(G(T))$ in the canonical basis.
\end{problem}


\begin{thebibliography}{ccc}


\bibitem[And]{And} H.H.~Andersen. Schubert varieties and Demazure's character formula. Invent. Math. 79 (1985), no. 3, 611--618.

\bibitem[BHL]{BHL}  Y.~Bai, S.~He, and T.~Lam. The amplituhedron and the one-loop Grassmannian measure. J. High Energy Phys. 2016, no. 1, 112, front matter+41 pp. 

\bibitem[Bru]{Bru} J. Brundan. Dual canonical bases and Kazhdan-Lusztig polynomials. J. Algebra 306 (2006), no. 1, 17--46.

\bibitem[CWZ]{CWZ} S.-J. Cheng, W. Wang, and R.B. Zhang.  Super duality and Kazhdan-Lusztig polynomials, Trans. AMS 360 (2008), 5883--5924. 

\bibitem[Dem]{Dem} M.~Demazure. D\'esingularisation des vari\'et\'es de Schubert g\'en\'eralis\'ees. (French) Collection of articles dedicated to Henri Cartan on the occasion of his 70th birthday, I. Ann. Sci. \'Ecole Norm. Sup. (4) 7 (1974), 53--88. 

\bibitem[Du92]{Du} J. Du. Canonical bases for irreducible representations of quantum $\GL_n$. Bull. London Math. Soc. 24 (1992), no. 4, 325--334.

\bibitem[Du95]{Du2} J.~Du. Canonical bases for irreducible representations of quantum $\GL_n$. II. J. London Math. Soc. (2) 51 (1995), no. 3, 461--470. 

\bibitem[FoPy]{FP} S.~Fomin and P.~Pylyavskyy.  Tensor diagrams and cluster algebras. Adv. Math. 300 (2016), 717--787.

\bibitem[FLL]{FLL} C. Fraser, T.~Lam, and I.~Le.  From dimers to webs, Trans. AMS, to appear; {\tt arXiv:1705.09424}.

\bibitem[Kas93a]{KasGl} M. Kashiwara.  Global crystal bases of quantum groups. Duke Math. J. 69 (1993), no. 2, 455--485.

\bibitem[Kas93b]{Kasdem} M. Kashiwara. The crystal base and Littelmann's refined Demazure character formula. Duke Math. J. 71 (1993), no. 3, 839--858. 


\bibitem[KLS13]{KLS} A. Knutson, T. Lam, and D. Speyer. Positroid varieties: juggling and geometry. Compos. Math. 149 (2013), no. 10, 1710--1752.

\bibitem[KLS14]{KLS2} A. Knutson, T. Lam, and D. Speyer. Projections of Richardson varieties. J. Reine Angew. Math. 687 (2014), 133--157.

\bibitem[Kum]{Kum} S.~Kumar.  Kac-Moody groups, their flag varieties and representation theory. Progress in Mathematics, 204. Birkh\"auser Boston, Inc., Boston, MA, 2002. xvi+606 pp. 

\bibitem[LaLi]{LaLi} V. Lakshmibai and P. Littelmann. Richardson varieties and equivariant K-theory. Special issue celebrating the 80th birthday of Robert Steinberg. J. Algebra 260 (2003), no. 1, 230--260.


\bibitem[Lam14]{LamTL} T.~Lam.  Dimers, webs, and positroids. J. Lond. Math. Soc. (2) 92 (2015), no. 3, 633--656.

\bibitem[Lam16]{LamCDM} T.~Lam.  Totally nonnegative Grassmannian and Grassmann polytopes. Current developments in mathematics 2014, 51--152, Int. Press, Somerville, MA, 2016.

\bibitem [LaLe11]{LLtwist} S.~Launois and T.H.~Lenagan. Twisting the quantum Grassmannian. Proc. Amer. Math. Soc. 139 (2011), no. 1, 99--110.

\bibitem[LeRi]{LR} T.H.~Lenagan and L.~Rigal. Quantum analogues of Schubert varieties in the Grassmannian. Glasg. Math. J. 50 (2008), no. 1, 55--70.


\bibitem[Lus94]{Lus} G.~Lusztig. Introduction to quantum groups. Progress in Mathematics, 110. Birkh\"auser Boston, Inc., Boston, MA, 1993. xii+341 pp.

\bibitem[Lus94]{LusTP} G.~Lusztig. Total positivity in reductive groups. Lie theory and geometry, 531--568, Progr. Math., 123, Birkh\"auser Boston, Boston, MA, 1994.

\bibitem[M\'eCa]{MC} A. M\'eriaux and G. Cauchon. Admissible diagrams in $U_q^w({\mathfrak{g}})$ and combinatoric properties of
Weyl groups, Represent. Theory 14 (2010), 645--687.

\bibitem[Oh]{Oh} S. Oh. Positroids and Schubert matroids, Journal of Combinatorial Theory, Series A 118, (2011), 2426--2435.

\bibitem[Pos]{Pos} A. Postnikov. Total positivity, Grassmannians, and networks, preprint; \url{http://math.mit.edu/~apost/papers/tpgrass.pdf}.

\bibitem[Rho]{Rho} B. Rhoades. Cyclic sieving, promotion, and representation theory. Journal of Combinatorial Theory, Series A, 117 (1), (2010), 38--76.

\bibitem[Ska]{Ska} M. Skandera. On the dual canonical and Kazhdan-Lusztig bases and 3412, 4231-avoiding permutations. J. Pure App. Algebra, 212, (2008) 1086--1104.

\bibitem[Ste]{Ste} J. R. Stembridge. Multiplicity-free products and restrictions of Weyl characters. Represent. Theory 7 (2003), 404--439.

\bibitem[Yak]{Y} M. Yakimov. Invariant prime ideals in quantizations of nilpotent Lie algebras. Proc. London
Math. Soc. (3) 101 (2010), no. 2, 454--476.

\end{thebibliography}
\end{document}